\documentclass{amsart}

\usepackage[T1]{fontenc}
\usepackage{amscd,amssymb,amsfonts}
\usepackage{hyperref}

\usepackage[all]{xy}

\allowdisplaybreaks[2]

\numberwithin{equation}{section}

\newtheorem{dummy}{dummy}[section]
\newtheorem{lemma}[dummy]{Lemma}
\newtheorem{theorem}[dummy]{Theorem}

\newtheorem{proposition}[dummy]{Proposition}

\theoremstyle{definition}
\newtheorem{definition}[dummy]{Definition}

\newtheorem{remark}[dummy]{Remark}

\setcounter{tocdepth}{1}

\newcommand{\C}{\mathbb{C}}
\newcommand{\R}{\mathbb{R}}
\newcommand{\Z}{\mathbb{Z}}


\newcommand{\cA}{\mathcal{A}}
\newcommand{\cH}{\mathcal{H}}
\newcommand{\cE}{\mathcal{E}}
\newcommand{\cC}{\mathcal{C}}

\newcommand{\cF}{\mathcal{F}}
\newcommand{\cZ}{\mathcal{Z}}

\newcommand{\cM}{\mathcal{M}}
\newcommand{\cT}{\mathcal{T}}
\newcommand{\cG}{\mathcal{G}}

\newcommand{\Aut}{\mathrm{Aut}}
\newcommand{\Out}{\mathrm{Out}}
\newcommand{\Int}{\mathrm{Int}}

\newcommand{\Id}{\mathrm{Id}}
\newcommand{\Hom}{\mathrm{Hom}}
\newcommand{\Hol}{\mathrm{Hol}}

\newcommand{\irr}{\mathrm{irr}}
\renewcommand{\deg}{\mathrm{deg}}

\newcommand{\GL}{\mathbf{GL}}
\newcommand{\PGL}{\mathbf{PGL}}
\newcommand{\U}{\mathbf{U}}
\newcommand{\PSL}{\mathbf{PSL}}

\newcommand{\si}{\sigma}
\newcommand{\Si}{\Sigma}
\newcommand{\ga}{\gamma}
\newcommand{\Ga}{\Gamma}
\newcommand{\lra}{\longrightarrow}
\newcommand{\lmt}{\longmapsto}
\newcommand{\eps}{\varepsilon}
\newcommand{\piX}{\pi_1(X;x)}
\newcommand{\Xt}{\widetilde{X}}
\newcommand{\Css}{\cC_{ss}}
\newcommand{\Cs}{\cC_{s}}
\newcommand{\Cmu}{\cC_{\mu}}
\newcommand{\dmu}{d_{\mu}}
\newcommand{\codim}{\mathrm{codim}}
\newcommand{\piorb}{\pi_1^{\mathrm{orb}}}
\newcommand{\taut}{\widetilde{\tau}}

\newcommand{\ov}[1]{\overline{#1}}

\renewcommand{\phi}{\varphi}
\renewcommand{\rho}{\varrho}

\begin{document}

\title[Finite group actions on moduli spaces of vector bundles]{Finite group actions on moduli spaces of\\ vector bundles}
\author{Florent Schaffhauser}
\address{Departamento de Matem\'aticas, Universidad de Los Andes, Bogot\'a, Colombia.}
\email{florent@uniandes.edu.co}

\subjclass[2000]{14H60,14H30}

\keywords{Vector bundles on curves and their moduli,Fundamental groups}

\date{\today}

\thanks{The author acknowledges the support of U.S. National Science Foundation grants DMS 1107452, 1107263, 1107367 "RNMS: Geometric structures And Representation varieties" (the GEAR Network). Thanks go to Victoria Hoskins for helpful discussions on quiver varieties analogues of the topics covered in this paper.}

\begin{abstract}
We study actions of finite groups on moduli spaces of stable holomorphic vector bundles and relate the fixed-point sets of those actions to representation varieties of orbifold fundamental groups.
\end{abstract}

\maketitle


\tableofcontents

Let $X$ be a compact connected Riemann surface of genus $g\geq 2$. The goal of the paper is to study certain finite group actions on moduli spaces of stable vector bundles over $X$, equipped with their standard complex analytic structure (\cite{NS1,NS}), where by automorphism of such a moduli space, we mean a transformation that is either holomorphic or anti-holomorphic. 

In Section \ref{autom}, we show how such actions arise naturally from groups of holomorphic and anti-holomorphic transformations of $X$ and we describe these actions from three different points of view: algebro-geometric, gauge-theoretic, and via the fundamental group of $X$. Then, in Section \ref{modular_interp_section}, we dig deeper into the gauge-theoretic picture in order to interpret the fixed points of the actions constructed earlier in terms of moduli of certain vector bundles with extra structure. Finally, in Section \ref{NS_and_orb}, we relate those vector bundles with extra structure to unitary representations of orbifold fundamental groups.

\section{Automorphisms of the moduli space}\label{autom}

\subsection{The algebro-geometric picture}\label{alg_geo_pic}

Let $X$ be a compact connected Riemann surface of genus $g\geq 2$. A holomorphic vector bundle $\cE$ over $X$ is called \textit{stable} if, for all non-trivial holomorphic sub-bundle $\cF\subset\cE$, one has $\deg(\cF^*\otimes\cE) >0$ (semistability is defined using a large inequality instead). Rank $r$ vector bundles are topologically classified by their degree $d\in H^2(X;\pi_1(\GL(r;\C)))\simeq \pi_1(\GL(r;\C)) \simeq \Z$ and, by a theorem due to Mumford (\cite{Mumford_Proc}), the set $\cM_X(r,d)$, consisting of isomorphism classes of stable holomorphic vector bundles and degree $d$ admits a natural structure of non-singular quasi-projective variety of dimension $r^2(g-1)+1$. An important property of stable holomorphic vector bundles is that they are \textit{simple} (they only admit those automorphisms that come from the centre of the structure group).
%
%
As a matter of fact, much of what we shall say in this paper extends to moduli spaces of simple and stable principal holomorphic $G_\C$-bundles of fixed topological type $d\in H^2(X;\pi_1(G_\C)) \simeq \pi_1(G_\C)$, for $G_\C$ a (connected) reductive complex Lie group. Such moduli spaces exist (\cite{Ramanathan}) and have dimension $\dim G_\C\,(g-1) + \dim \cZ(G_\C)$, where $\cZ(G_\C)$ is the centre of $G_\C$. This suggests adopting the following notation for the remainder of this section: $G_\C:=\GL(r;\C)$ and $\cM_X(G_\C,d) :=\cM_X(r,d)$. 

Our goal is now to define finite group actions on $\cM_X(G_\C,d)$. These could in principle be induced by two different types of transformations:

\begin{enumerate}
\item either holomorphic or anti-holomorphic transformations of the complex Lie group $G_\C$;
\item either holomorphic or anti-holomorphic transformations of the Riemann surface $X$.
\end{enumerate}

Indeed, if $\cE$ is a bundle represented by a holomorphic $1$-cocycle $g_{ij}: U_i\cap U_j \lra G_\C$, $u\in \Aut(G_\C)$ is a group automorphism and $\si:X\lra X$ is an automorphism of $X$, we can form the $1$-cocycle $u\circ g_{ij} \circ \si^{-1}:\si(U_i)\cap\si(U_j) \lra \GL(r;\C)$. The issue is that we need to guarantee that $u\circ g_{ij}\circ \si^{-1}$ is still holomorphic, so we have to impose that $u$ and $\si$ be both holomorphic or both anti-holomorphic. In the present paper, we will achieve that for the following class of examples. Let $\Aut^\pm(X)$ be the group of either holomorphic or anti-holomorphic bijective transformations of $X$. Then there is a short exact sequence
\begin{equation}\label{ses_Aut_X}
1\lra \Aut(X) \lra \Aut^\pm(X) \lra \Z/2\Z\lra 1
\end{equation}
\noindent whose kernel consists of holomorphic automorphisms of $X$ (a splitting of \eqref{ses_Aut_X} is equivalent to an anti-holomorphic involution of $X$; it may or not exist). Since $g\geq 2$ by assumption, $\Aut^\pm(X)$ is a finite group of order no greater than $168(g-1)$. Let $\Si$ be a subgroup of $\Aut^\pm(X)$ and denote by $\eps:\Si\lra\Z/2\Z$ the group homomorphism induced by \eqref{ses_Aut_X}: \begin{equation}\label{construction_eps}\eps(\si) = \left\{\begin{array}{cl}
+1 & \mathrm{if}\ \si:X\lra X\ \mathrm{is\ holomorphic,}\\
-1 & \mathrm{if}\ \si:X\lra X\ \mathrm{is\ anti}-\mathrm{holomorphic.}
\end{array}\right.\end{equation}

Let now $\theta:G_\C\lra G_\C$ be an anti-holomorphic, involutive group homomorphism. This induces an action of $\Si$ on $G_\C$, defined by \begin{equation}\label{Si_action_on_G_def}\theta_\si:= \left\{ \begin{array}{cl}
\mathrm{Id}_{G_\C} & \mathrm{if}\ \eps(\si)=+1,\\
\theta & \mathrm{if}\ \eps(\si)=-1.
\end{array}\right.\end{equation} Thus, for all $\si\in \Si$ and $(g_{ij})_{(i,j)}$ a holomorphic $1$-cocycle on $X$ representing a $G_\C$-bundle $\cE$, we can define $\si(\cE)$ to be the $G_\C$-bundle defined by the holomorphic $1$-cocycle \begin{equation}\label{action_on_cocyclees}\si(g_{ij}):= \theta_\si\circ g_{ij}\circ \si^{-1}.\end{equation}

The real form $\theta$ of $G_\C=\GL(r;\C)$ that will be most important to us is $\theta:g\lra\ov{g}$, because then the standard action of $G_\C=\GL(r;\C)$ on $\C^r$ is a real action: $\ov{gv}=\ov{g}\,\ov{v}$. Then we have:  \begin{equation}\label{Si_action_on_bdles}\si(g_{ij})=\left\{\begin{array}{cl}
g_{ij}\circ\si^{-1} & \mathrm{if}\ \eps(\si)=+1,\\
\ov{g_{ij}\circ\si^{-1}} & \mathrm{if}\ \eps(\si)=-1.
\end{array}\right.
\end{equation}
\noindent In particular, if $(g_{ij})_{(i,j)}$ is a cocycle representing the vector bundle $\cE$, then the vector bundle $\si(\cE)$ defined by $\si(g_{ij})$ is isomorphic to $(\si^{-1})^*\cE$ if $\si$ is holomorphic and to $\ov{(\si^{-1})^*\cE}$ is $\si$ is anti-holomorphic, which readily implies the next result.

\begin{proposition}\label{Si_action_on_mod_spaces}
Let $\cE$ be a holomorphic vector bundle on $X$ and consider $\si\in\Si\subset\Aut^\pm(X)$. Then the holomorphic vector bundle $\si(\cE)$ defined by the $\Si$-action \eqref{Si_action_on_bdles} is stable if and only if $\cE$ is stable Moreover, there is an induced a $\Si$-action on each of the moduli spaces $\cM_X(r,d)$, for all $(r,d)$.
\end{proposition}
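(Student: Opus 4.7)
The plan is to exploit the identifications recorded just before the statement: $\si(\cE)\simeq(\si^{-1})^*\cE$ when $\si$ is holomorphic, and $\si(\cE)\simeq\ov{(\si^{-1})^*\cE}$ when $\si$ is anti-holomorphic. Both constructions are functorial and exact, so they send holomorphic sub-bundles to holomorphic sub-bundles and preserve the rank. The only numerical fact remaining is preservation of the degree; once that is established, stability transfers along $\si$ at once, since the inequality $\deg(\cF^*\otimes\cE)>0$ depends only on the ranks and degrees of $\cF$ and $\cE$, and the assignment $\cF\mapsto\si(\cF)$ is a bijection on sub-bundles, with $\si^{-1}$ furnishing the inverse.

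The main obstacle is the degree-preservation step in the anti-holomorphic case, where the two natural operations involved individually reverse the sign of the first Chern class and one has to see that their composition nonetheless preserves it. In the holomorphic case degree preservation is immediate since $\si^{-1}$ is a biholomorphism. In the anti-holomorphic case I would reduce to line bundles via $\det\ov{(\si^{-1})^*\cE}\simeq\ov{(\si^{-1})^*\det\cE}$ and then compute concretely with a meromorphic section: if $s$ is a meromorphic section of a line bundle $\cL$ with divisor $\sum n_p\,p$, then the cocycle formula \eqref{action_on_cocyclees} shows that $\ov{s\circ\si^{-1}}$ defines a meromorphic section of $\ov{(\si^{-1})^*\cL}$ whose divisor is $\sum n_p\,\si(p)$, of the same total degree. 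This route avoids having to track orientations on $X$ explicitly.

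Finally, for the induced action on $\cM_X(r,d)$, the $\Si$-action on bundles descends to isomorphism classes by functoriality of the construction on holomorphic maps, and the group-law identity $(\si_1\si_2)(\cE)\simeq\si_1(\si_2(\cE))$ reduces, at the level of cocycles in \eqref{action_on_cocyclees}, to $\theta_{\si_1\si_2}=\theta_{\si_1}\circ\theta_{\si_2}$; this is a short four-case verification using $\theta^2=\Id_{G_\C}$ together with $\eps$ being a group homomorphism. Since $\si$ preserves both the rank and the degree, the induced action restricts to each moduli space $\cM_X(r,d)$, completing the argument.
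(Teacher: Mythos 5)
Your proposal is correct and follows the same route as the paper, whose entire proof is the one-line observation that $\deg(\si(\cE))=\deg\cE$ under the action \eqref{Si_action_on_bdles}; you simply supply the details the paper leaves implicit (the sign cancellation between the orientation-reversing pullback and complex conjugation in the anti-holomorphic case, and the cocycle-level verification that $\si\mapsto\si(\cdot)$ is a group action).
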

\begin{proof}
For holomorphic vector bundles, both properties follow from the fact that $\deg(\si(\cE))=\deg\,\cE$ when $\Si$ acts as described in \eqref{Si_action_on_bdles}.
\end{proof}

Note that it is not true in general that there is an induced $\Si$-action on every moduli space $\cM_X(G_\C,d)$. For instance, if $\theta(g)=(\ov{g}^t)^{-1}$ on $\GL(r;\C)$, then $\deg(\si(\cE))=-\deg\,\cE$, so only the moduli space $\cM_X(G_\C,0)$ will be preserved by the induced $\Si$-action.

\subsection{The gauge-theoretic picture}\label{gauge_theoretic_picture}

The Narasimhan-Seshadri theorem (\cite{NS,Don_NS}) provides a diffeomorphism between the moduli space $\cM_X(r,d)$ of stable holomorphic vector bundles of rank $r$ and degree $d$ over $X$ and the space, to be denoted by $F^{-1}(\{\frac{d}{r}\})_{\irr}/\cG_E$, constisting of gauge equivalence classes of projectively flat, irreducible unitary connections on a fixed Hermitian vector bundle $E$ over $X$, of rank $r$ and degree $d$. Here, by $\cG_E$, we denote the unitary gauge group of the Hermitian vector bundle $E$ and by $F$ the map taking a unitary connection $A$ on $E$ to its curvature form $F(A)$, which is a differential $2$-form on $E$ with values in the bundle of anti-Hermitian endomorphisms of $E$. If we use a compatible Riemannian metric of volume $2\pi$ on $X$ and replace anti-Hermitian endomorphisms by Hermitian ones (using multiplication by $\sqrt{-1}$), we have indeed that projectively flat  connections are those connections that satisfy $F(A)=\frac{d}{r}\Id_E$, hence our notation above. A unitary connection $A$ on $E$ is called \textit{irreducible} if there is no non-trivial sub-bundle $F\subset E$ whose sections are preserved by the covariant derivative associated to $A$. Equivalently, there is no direct sum decomposition $E\simeq E_1\oplus E_2$ such that $A=A_1\oplus A_2$. Or yet again, the stabiliser of $A$ in $\cG_E$ is equal to the centre $\cZ(\cG_E) \simeq \cZ(\U(r)) \simeq S_1$ of the gauge group $\cG_E$.

Let us now assume that the $\Si$-action on $X$ lifts to $E$, i.e.\ that there exists a family $(\tau_\si)_{\si\in\Si}$ of (normalised) bundle isometries from $E$ to itself satisfying the conditions:

\begin{enumerate}
\item For all $\si\in\Si$, we have a commutative diagram $$\begin{CD}
E @>{\tau_\si}>> E \\
@VVV @VVV\\
X @>{\si}>> X
\end{CD}$$
\item If $\si:X\lra X$ is holomorphic, the map $\tau_\si:E\lra E$ is fibrewise $\C$-linear. If $\si:\lra X$ is anti-holomorphic,  the map $\tau_\si:E\lra E$ is fibrewise $\C$-anti-linear.
\item $\tau_{1_\Si}=\Id_E$ and, for all $(\si_1,\si_2)\in\Si\times \Si$, $\tau_{\si_1\si_2} = \tau_{\si_1}\tau_{\si_2}$. In particular, for all $\si$, $\tau_\si$ is invertible and $\tau_\si^{-1} = \tau_{\si^{-1}}$.
\end{enumerate}

This is equivalent to asking that there exists a family $(\phi_\si)_{\si\in E}$ of (normalised) isomorphisms satisfying the conditions:

\begin{enumerate}
\item For all $\si\in\Si$, we have a commutative diagram $$\xymatrix{
\si(E) \ar[rr]^{\phi_\si} \ar[dr] & & E \ar[dl]\\
&X &}$$
\item The map $\phi_\si:\si(E)\lra E$ is fibrewise $\C$-linear.
\item $\phi_{1_{\si}} = \Id_{E}$ and, for all $(\si_1,\si_2)\in\Si\times\Si$, $\phi_{\si_1\si_2} = \phi_{\si_1}\si_1(\phi_{\si_2})$. In particular, for all $\si$, $\phi_{\si}$ is an isomorphism and $\phi_{\si}^{-1} = \si(\phi_{\si^{-1}})$.
\end{enumerate}

Given an element $\si\in \Si$ and a unitary connection $A$ on $E$, we denote by $\si(A)$ the induced connection on $\si(E)$, where $\si(E)$ is the Hermitian vector bundle defined as in \eqref{action_on_cocyclees}, except that we are now working in the $C^\infty$ category. Using the isomorphism $\phi_{\si}:\si(E)\lra E$, we can then define \begin{equation}\label{Si_action_on_conn_def}
\beta_{\si}(A) := (\phi_{\si}^{-1})^*\big(\si(A)\big).
\end{equation}

\begin{proposition}\label{Si_action_on_conn}
The map $\si\lmt\beta_{\si}$ induced by \eqref{Si_action_on_conn_def} defines a $\Si$-action on the space $\cA_E$ of unitary connections on $E$.
\end{proposition}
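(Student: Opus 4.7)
The plan is to verify the two defining properties of a group action, $\beta_{1_\Si} = \Id_{\cA_E}$ and $\beta_{\si_1 \si_2} = \beta_{\si_1} \circ \beta_{\si_2}$, after first observing that $\beta_\si$ does land in $\cA_E$. The latter point holds because $\si(A)$ is a unitary connection on the Hermitian bundle $\si(E)$ (whose Hermitian structure is chosen so that $\tau_\si$, equivalently $\phi_\si$, is an isometry, $\C$-linear in the holomorphic case and $\C$-anti-linear in the anti-holomorphic case), and because $\phi_\si^{-1}: E \lra \si(E)$ is then a Hermitian isomorphism, so its pullback sends unitary connections to unitary connections.

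For the identity element: the cocycle conditions on $(\phi_\si)_{\si\in\Si}$ give $\phi_{1_\Si} = \Id_E$, and by \eqref{Si_action_on_G_def}--\eqref{action_on_cocyclees} applied in the $C^\infty$ category one has $1_\Si(A) = A$, whence $\beta_{1_\Si}(A) = A$.

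For the cocycle condition, the key ingredients are: (i) the $\Si$-action defined by \eqref{action_on_cocyclees}, extended verbatim to the smooth category, is itself a group action on Hermitian bundles and unitary connections, so that $(\si_1\si_2)(E) = \si_1(\si_2(E))$ and $(\si_1\si_2)(A) = \si_1(\si_2(A))$ (this uses $\theta^2 = \Id_{G_\C}$); (ii) contravariance of pullback, $(g\circ f)^* = f^*\circ g^*$, together with the compatibility $\si_1((\phi_{\si_2}^{-1})^*\si_2(A)) = (\si_1(\phi_{\si_2}^{-1}))^*(\si_1\si_2)(A)$ expressing that $\si_1$ transports pullbacks of connections to pullbacks of the $\si_1$-transformed connections; (iii) the cocycle identity $\phi_{\si_1\si_2} = \phi_{\si_1}\circ\si_1(\phi_{\si_2})$, whose inverse form reads $\phi_{\si_1\si_2}^{-1} = \si_1(\phi_{\si_2}^{-1})\circ\phi_{\si_1}^{-1}$. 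Stringing these together yields
\begin{equation*}
\beta_{\si_1}(\beta_{\si_2}(A)) = (\phi_{\si_1}^{-1})^*\,\si_1\!\big((\phi_{\si_2}^{-1})^*\si_2(A)\big) = \big(\si_1(\phi_{\si_2}^{-1})\circ\phi_{\si_1}^{-1}\big)^*(\si_1\si_2)(A) = (\phi_{\si_1\si_2}^{-1})^*(\si_1\si_2)(A),
\end{equation*}
which is $\beta_{\si_1\si_2}(A)$, as required.

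The main obstacle is the bookkeeping contained in (i) and (ii): one must check carefully that the smooth version of the construction $\si\lmt\si(\cE)$ is covariantly functorial on bundles and on unitary connections, and that it intertwines pullback by a bundle isomorphism $\psi$ with pullback by $\si(\psi)$. These verifications are routine in the holomorphic case $\eps(\si)=+1$, but require care in the anti-holomorphic case, where the $\C$-structure and the Hermitian metric on $\si(E)$ are the conjugates of those on $E$, so that $\phi_\si$ is $\C$-linear between $\si(E)$ and $E$ but $\tau_\si$ is $\C$-anti-linear on fibres of $E$. Once this functoriality is in place, the verification of the cocycle property reduces to the one-line computation above.
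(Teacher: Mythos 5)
Your proof is correct and follows essentially the same route as the paper: the core is the identical chain of equalities combining the cocycle identity $\phi_{\si_1\si_2} = \phi_{\si_1}\si_1(\phi_{\si_2})$, contravariance of pullback, and functoriality of $A\lmt\si(A)$. The additional checks you spell out (that $\beta_\si$ preserves $\cA_E$, the identity axiom, and the smooth-category bookkeeping) are left implicit in the paper but are consistent with its argument.
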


\begin{proof}
One has, for all $(\si_1,\si_2)\in\Si\times\Si$, \begin{eqnarray*}
\beta^{\tau}_{\si_1\si_2}(A) & = & (\phi_{\si_1\si_2}^{-1})^*\big((\si_1\si_2)(A)\big)\\
& = & \big(\si_1(\phi_{\si_2}^{-1})\phi_{\si_1}^{-1}\big)^*\big(\si_1(\si_2(A))\big) \\
& = & (\phi_{\si_1}^{-1})^* \big(\si_1((\phi_{\si_2}^{-1})^*\si_2(A))\big)\\
& = & (\beta^{\tau}_{\si_1}\circ\beta^{\tau}_{\si_2})(A),
\end{eqnarray*} which proves the proposition.
\end{proof}

Since $\cZ(\cG_E)\simeq\cZ(\U(r))$ acts trivially on $\cA_E$, we can actually relax Condition (3) in the definitions of the families $(\tau_\si)_{\si\in\Si}$ and $(\phi_\si)_{\si\in\Si}$ and still obtain Proposition \ref{Si_action_on_conn}. Indeed, if $c:\Si\times\Si\lra\cZ(\U(r))$ is any (normalised) $2$-cocycle, one need only ask 
\begin{itemize}
\item[$\mathrm{(3')}$] $\tau_{\si_1}\tau_{\si_2} = c(\si_1,\si_2)\tau_{\si_1\si_2}$
\end{itemize}
\noindent or equivalently, $\phi_{\si_1}\si_1(\phi_{\si_2}) = c(\si_1,\si_2) \phi_{\si_1\si_2}$, because then, in the proof of Proposition \ref{Si_action_on_conn}, one obtains $$\beta^{\tau}_{\si_1\si_2}(A) = \big(c(\si_1,\si_2)\big)^* \big(\beta^{\tau}_{\si_1}\circ\beta^{\tau}_{\si_2}\big) (A) = (\beta^{\tau}_{\si_1}\circ\beta^{\tau}_{\si_2})(A),$$ since $c(\si_1,\si_2)\in\cZ(\U(r))$ and $\cZ(\U(r))$ acts trivially on $\cA_E$. This motivates the following definition.

\begin{definition}\label{c-twisted_lift}
Given a $2$-cocycle $c:\Si\times \Si\lra \cZ(\U(r))$, a $c$-equivariant structure on the Hermitian vector bundle $E$ is a family $(\tau_\si)_{\si\in\Si}$ of maps $\tau_\si:E\lra E$ satisfying Conditions (1), (2) and (3') above.
\end{definition}

\noindent When $c$ is the trivial cocycle, a $c$-equivariant structure will be called just an equivariant structure (the resulting notion of equivariant bundle was already studied in \cite{Andersen-Grove}, for the case of holomorphic transformations on $X$). The issue is then to check that the $\Si$-action on $\cA_E$ defined in \eqref{Si_action_on_conn_def}, which depends on the $c$-equivariant structure $(\tau_{\si})_{\si\in\Si}$ on $E$, induces a $\Si$-action on $F^{-1}(\{\frac{d}{r}\})_\irr/\cG_E$. When $d=0$, this is a consequence of the following general framework for $(\Si,\cG_E)$-compatible group actions on Hamiltonian spaces:
\begin{enumerate}
\item There is a $\Si$-action on $\cG_E$, defined for $\si\in\Si$ by \begin{equation}\label{Si_action_on_gauge_gp_def}
\alpha^{\tau}_\si(g)=\tau_\si g\tau_\si^{-1} = \big(\phi_{\si}^{-1}\big)^* \big(\si(g)\big),\end{equation} which is compatible with the $\Si$-action on $\cA_E$ defined in \eqref{Si_action_on_conn_def} in the sense that \begin{equation}\label{comp_condition_group_actions}\beta^{\tau}_\si(g\cdot A) = \alpha^{\tau}_\si(g)\cdot\beta^{\tau}_\si(A).\end{equation} Note that, in order to define the $\Si$-action on the gauge group $\cG_E$ by $\alpha^{\tau}_\si$ by the expression \eqref{Si_action_on_gauge_gp_def}, one implicitly uses the $\Si$-action on the structure group $G_\C$ defined in \eqref{Si_action_on_G_def}: the real form $\theta:G_\C\lra G_\C$ is hidden in the notation $\si(g)$ for the gauge transformation of $\si(E)$ induced by the gauge transformation $g$ of $E$.
\item The $\Si$-action on $\cG_E$ defined above induces a $\Si$-action on the dual of the Lie algebra of $\cG_E$, which is where the ($\cG_E$-equivariant) momentum map $F$ for the $\cG_E$-action on $\cA_E$ takes its values (\cite{AB}). If we still denote by $\alpha^{\tau}_\si$ this induced action, we have the following compatibility relation between the $\Si$-action on $\cA_E$ and the momentum map $F$: $$F\big(\beta^{\tau}_\si(A)\big) = \alpha^{\tau}_\si\big(F(A)\big).$$
\end{enumerate}

\begin{proposition}\label{Si_action_on_Hamil_quotient}
The $\Si$-action on $\cA_E$ defined in \eqref{Si_action_on_conn_def} induces a $\Si$-action on the Hamiltonian reduction $F^{-1}(\{0\})_\irr/\cG_E$.
\end{proposition}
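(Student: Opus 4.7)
The plan is to verify three compatibility properties for $\beta_\si$: that it preserves the level set $F^{-1}(\{0\})$, that it preserves irreducibility, and that it descends to a well-defined transformation of the $\cG_E$-quotient. Combined with Proposition \ref{Si_action_on_conn}, which already gives that $\si \lmt \beta_\si$ is a $\Si$-action on $\cA_E$, these three properties will yield the desired $\Si$-action on the Hamiltonian reduction.

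For the level-set preservation, I would invoke the momentum-map intertwining property $F(\beta_\si(A)) = \alpha_\si(F(A))$ stated in (2) just above the proposition. Since $\alpha_\si$ is induced by conjugation by $\tau_\si$, it is a group automorphism of $\cG_E$, and its induced action on the dual of $\mathrm{Lie}\,\cG_E$ (which is linear when $\eps(\si)=+1$ and conjugate-linear when $\eps(\si)=-1$) fixes $0$. Hence $F(A)=0$ forces $F(\beta_\si(A))=0$. For descent to the quotient, the compatibility \eqref{comp_condition_group_actions} gives $\beta_\si(g\cdot A)=\alpha_\si(g)\cdot\beta_\si(A)$; since $\alpha_\si(g)\in\cG_E$, this sends $\cG_E$-orbits to $\cG_E$-orbits.

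For irreducibility, the same relation \eqref{comp_condition_group_actions} shows that the stabiliser satisfies $\mathrm{Stab}_{\cG_E}(\beta_\si(A))=\alpha_\si(\mathrm{Stab}_{\cG_E}(A))$. Since $\alpha_\si$ is a group automorphism of $\cG_E$ and the centre $\cZ(\cG_E)\simeq\cZ(\U(r))$ is a characteristic subgroup, $\alpha_\si$ maps $\cZ(\cG_E)$ onto itself. Therefore, if $A$ is irreducible, i.e.\ if $\mathrm{Stab}_{\cG_E}(A)=\cZ(\cG_E)$, the same equality holds for $\beta_\si(A)$, so irreducibility is preserved.

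The one point requiring care — and hence the place where I expect the main subtlety — is reconciling this with the $c$-equivariant setup of Definition \ref{c-twisted_lift}, since then $\si\lmt \tau_\si$ is only a projective lift. However, $\alpha_\si(g)=\tau_\si g\tau_\si^{-1}$ is insensitive to modification of $\tau_\si$ by a central element, so each $\alpha_\si$ is genuinely a group automorphism of $\cG_E$ and the three steps above go through verbatim. The failure of strict $\Si$-equivariance at the level of $(\tau_\si)_{\si\in\Si}$ contributes only central, hence inner, corrections, which vanish after passing to $\cA_E/\cG_E$ — exactly as in the computation at the end of \S\ref{gauge_theoretic_picture} showing that $\beta_{\si_1\si_2}=\beta_{\si_1}\circ\beta_{\si_2}$ on $\cA_E$.
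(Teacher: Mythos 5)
Your proposal is correct and follows essentially the same route as the paper: both arguments use the compatibility relation \eqref{comp_condition_group_actions} to identify $\mathrm{Stab}_{\cG_E}(\beta_\si(A))$ with $\alpha_\si(\mathrm{Stab}_{\cG_E}(A))$ (hence preservation of irreducibility, since $\alpha_\si$ is a group automorphism preserving the centre) and to descend to $\cG_E$-orbits, and both use the momentum-map equivariance $F(\beta_\si(A))=\alpha_\si(F(A))$ together with the fact that $\alpha_\si$ acts ($\R$-)linearly on $\mathrm{Lie}\,\cG_E$ and so fixes $0$. Your closing remark about central corrections from the $c$-equivariant lift is exactly the observation the paper makes just before Definition \ref{c-twisted_lift}, so nothing new is needed there.
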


\begin{proof}
By Condition (1), the stabiliser of $\beta^{\tau}_\si(A)$ in $\cA_E$ is equal to $\alpha^{\tau}_\si(\mathrm{Stab}_{\cG_E}(A))$. Since $\Si$ acts on $\cG_E$ by group automorphisms, it preserves the centre of $\cG_E$ and, consequently, the space of irreducible connections. By Condition (2), if $A$ is a flat unitary connection, the curvature of $\beta^{\tau}_\si(A)$ is $\alpha^{\tau}_\si(F(A))$, which is $0$ because $\Si$ acts linearly on the Lie algebra of $\cG_E$. In particular, the $\Si$-action preserves the space of flat connections in $\cA_E$. Condition (1) then ensures that the $\Si$-action on $F^{-1}(\{0\})_\irr = F^{-1}(\{0\})\cap(\cA_E)_\irr$ descends to $F^{-1}(\{0\})_\irr/\cG_E$.
\end{proof}

To see that Proposition \ref{Si_action_on_Hamil_quotient} remains true when $d\neq 0$, we need to use special properties of the involution $\theta$ on $G_\C$: since we have chosen $\theta$ to be the split real form of $G_\C=\GL(r;\C)$, the induced involution on the Lie algebra of $\mathfrak{u}(r)$ restricts to $\xi\lmt -\xi$ on the centre of $\mathfrak{u}(r)$. Therefore, the induced $\Si$-action on $Lie(\cG_E)$ fixes the element $i\frac{d}{r}\Id_E$ and we can shift the momentum map by such an element and reduce the $d\neq 0$ case to the $d=0$ case. This shows, as in Proposition \ref{Si_action_on_mod_spaces}, the importance of working with the split real form $\theta$ of $G_\C$ in order to obtain a $\Si$-action on each of the moduli spaces $\cM_X(G_\C,d)$, for all $d\in\pi_1(G_\C)$.

To conclude the present subsection, recall that unitary connections on $E$ are in bijective correspondence with holomorphic structures on that bundle. More precisely, to each unitary connection $A$ on $E$ there is associated a Dolbeault operator $\ov{\partial}_A$ on $E$ whose kernel defines a locally free sheaf of rank $r$ over the sheaf of holomorphic functions on $X$, which in turn defines a holomorphic vector bundle $\cE$. It can then be checked (see for instance \cite{Sch_JSG} for the case where $\Si\simeq\Z/2\Z$ acts on $X$ via an anti-holomorphic involution $\si$) that the holomorphic vector bundle $\si(\cE)$ defined in \eqref{Si_action_on_bdles} is the holomorphic vector bundle associated to the unitary connection $\beta^{\tau}_\si(A)$ defined in \eqref{Si_action_on_conn_def}. Since $\si(\cE)$ does not depend on the choice of a background Hermitian bundle $E$ with a $c$-equivariant structure $(\tau_\si)_{\si\in\Si}$, the $\Si$-action on $F^{-1}(\{0\})_\irr/\cG_E$ defined in Proposition \ref{Si_action_on_Hamil_quotient} \textit{always} corresponds, under the Narasimhan-Seshadri diffeomorphism $F^{-1}(\{0\})_\irr/\cG_E \simeq \cM_X(r,0)$ to the canonical $\Si$-action $\cE\lmt\si(\cE)$. In particular, the $\Si$-action induced on the Hamitlonian quotient $F^{-1}(\{0\})_\irr/\cG_E$ is independent of the choice of the $c$-equivariant structure $(\tau_\si)_{\si\in\Si}$ and also of the choice of the cocycle $c$, as we shall now prove directly.

\begin{proposition}\label{induced_action_is_canonical}
We have the following results:
\begin{enumerate}
\item Fix a $2$-cocycle $c\in Z^2(\Si;\cZ(\U(r)))$ and let $(\phi_\si)_{\si\in\Si}$ and $(\psi_\si)_{\si\in\Si}$ be any two $c$-equivariant structures on $E$, the two induced $\Si$-actions on $\cA_E$ being respectively denoted by $(\beta^{\tau}_\si)_{\si\in\Si}$ and $(\ga_\si)_{\si\in\Si}$. Then $(\beta^{\tau}_\si)_{\si\in\Si}$ and $(\ga_\si)_{\si\in\Si}$ induce the same $\Si$-action on $F^{-1}(\{\frac{d}{r}\})/\cG_E$.
\item Let $c$ and $c'$ be two $2$-cocycles $\Si\times\Si\lra \cZ(\U(r))$. Let $(\phi_\si)_{\si\in\Si}$ be a $c$-equivariant structure on $E$ and let $(\phi'_\si)_{\si\in\Si}$ be a $c'$-equivariant structure on $E$. The two induced $\Si$-actions on $\cA_E$ are respectively denoted by $(\beta^{\tau}_\si)_{\si\in\Si}$ and $(\beta^{\tau,u}_\si)_{\si\in\Si}$. Then $(\beta^{\tau}_\si)_{\si\in\Si}$ and $(\beta^{\tau,u}_\si)_{\si\in\Si}$ induce the same $\Si$-action on $F^{-1}(\{\frac{d}{r}\})/\cG_E$.
\end{enumerate}
\end{proposition}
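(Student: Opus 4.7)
The plan is to reduce both parts of the proposition to a single observation: any two choices of lift $\phi_\si, \psi_\si : \si(E) \to E$ of the action of $\si\in\Si$ on $X$ (regardless of the cocycle relation either family satisfies) differ by a gauge transformation of $E$. Indeed, by Condition (1), both $\phi_\si$ and $\psi_\si$ cover the same map $\si:X\to X$, so their composition $h_\si := \psi_\si\circ\phi_\si^{-1}$ covers $\Id_X$. By Condition (2), both maps are either $\C$-linear (in the holomorphic case) or $\C$-antilinear (in the anti-holomorphic case), so the composition is $\C$-linear in either case. Hence $h_\si\in\cG_E$.

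Granting this, I would compute for part (1)
\begin{align*}
\ga_\si(A) &= (\psi_\si^{-1})^*\bigl(\si(A)\bigr) = (\phi_\si^{-1}\circ h_\si^{-1})^*\bigl(\si(A)\bigr)\\
&= (h_\si^{-1})^*\bigl((\phi_\si^{-1})^*\si(A)\bigr) = (h_\si^{-1})^*\bigl(\beta_\si(A)\bigr) = h_\si\cdot\beta_\si(A),
\end{align*}
where the final equality uses the standard identification of the gauge action on connections with pullback along the inverse gauge transformation. Thus $\ga_\si(A)$ and $\beta_\si(A)$ lie in the same $\cG_E$-orbit, and in particular represent the same point of $\cA_E/\cG_E$. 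Since both families $(\beta_\si)$ and $(\ga_\si)$ preserve the invariant locus $F^{-1}(\{\frac{d}{r}\})_\irr$ by Proposition \ref{Si_action_on_Hamil_quotient} (together with the shift argument discussed after it for $d\neq 0$), this shows that the two induced $\Si$-actions on $F^{-1}(\{\frac{d}{r}\})_\irr/\cG_E$ coincide.

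For part (2), the key point is that the calculation above never actually invoked any cocycle condition: it only used that both $\phi_\si$ and $\psi_\si$ are bundle isomorphisms $\si(E)\to E$ satisfying Conditions (1) and (2). So if I instead set $h_\si := \phi'_\si\circ\phi_\si^{-1}\in\cG_E$, the same identity yields $\beta'_\si(A) = h_\si\cdot\beta_\si(A)$, and the statement about the quotient follows verbatim. In particular, the cocycles $c$ and $c'$ are allowed to be completely different: the ambiguity is absorbed into $\cG_E$ upon passing to the quotient.

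The only step that demands some care is pinning down that $h_\si$ really lives in $\cG_E$, that is, verifying that the composition of (possibly) $\C$-antilinear maps $\phi_\si^{-1}$ and $\psi_\si$ is genuinely $\C$-linear as a self-map of $E$, which requires tracking the twisted complex structure on $\si(E)$ on both sides. Once this is clean, the remainder of the proof is routine pullback bookkeeping, and crucially the argument is purely formal: it does not depend on the Narasimhan–Seshadri diffeomorphism or on the preceding intrinsic description of the induced action as $\cE\mapsto\si(\cE)$, although it is of course compatible with both.
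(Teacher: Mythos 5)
Your proposal is correct and follows essentially the same route as the paper: both proofs set $g_\si:=\psi_\si\phi_\si^{-1}$ (resp.\ $\phi'_\si\phi_\si^{-1}$), observe that it lies in $\cG_E$, and conclude via the identity $\ga_\si(A)=g_\si\cdot\beta_\si(A)$ that the induced actions on the quotient agree. Your added care in checking that the composition of two fibrewise $\C$-antilinear maps covering the same $\si$ is $\C$-linear is a point the paper leaves implicit, but the argument is the same.
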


\begin{proof} Recall that we have denoted by $(\alpha^{\tau}_\si)_{\si\in\Si}$ the $\Si$-action on $\cG_E$ induced by the the choices of a cocycle $c$ and a $c$-equivariant structure $(\phi_\si)_{\si\in\Si}$: for all $\si\in\Si$ and all $g\in \cG_E$, $\alpha^{\tau}_\si(g) = \phi_\si \si(g) \phi_\si^{-1}$. Recall also the compatibility relation \eqref{comp_condition_group_actions} between $\alpha^{\tau}_\si$ and $\beta^{\tau}_\si$. 
\begin{enumerate}
\item Given $\si\in\Si$, let us consider the gauge transformation $u_\si:=\psi_\si\phi_\si^{-1}\in\cG_E$. Then one has, for all $\si\in\Si$ and all $A\in\cA_E$,
$$\ga_\si(A) = (\psi_\si^{-1})^* \si(A) = (\phi_\si^{-1} u_\si^{-1})^* \si(A) = (u_\si^{-1})^* \big((\phi_\si^{-1})^*\si(A)\big) = u_\si \cdot \beta^{\tau}_\si(A),$$ so $(\beta^{\tau}_\si)_{\si\in\Si}$ and $(\ga_\si)_{\si\in\Si}$ indeed induce the same $\Si$-action on $F^{-1}(\{\frac{d}{r}\})/\cG_E$.
\item The proof is the same as (1), setting now $u_\si:=\phi'_\si\phi_\si^{-1}\in\cG_E$.  
\end{enumerate}
\end{proof}

\noindent Note that a difference between (1) and (2) in the above proof is that in (1) we have $u_{\si_1\si_2} = u_{\si_1}\alpha^{\tau}_{\si_1}(u_{\si_2})$ (as seen by a direct computation), while in (2) we have $u_{\si_1\si_2} = c(\si_1,\si_2) c'(\si_1,\si_2)^{-1} u_{\si_1}\alpha^{\tau}_{\si_1}(u_{\si_2})$. In particular, if the cocycles $c$ and $c'$ are cohomologous, i.e.\ if there exists a (normalised) map $a:\Si\lra \cZ(\U(r))$ such that, for all $(\si_1,\si_2)\in\Si\times\Si$, $c(\si_1,\si_2)c'(\si_1,\si_2)^{-1}= a_{\si_1}\si_1(a_{\si_2}) a_{\si_1\si_2}^{-1}$, then the family $(h_\si:=a_\si u_\si)_{\si\in\Si}$ satisfies $h_{\si_1\si_2} = h_{\si_1}\alpha^{\tau}_{\si_1}(h_{\si_2})$, meaning that in this case we can actually modify the $(g_{\si})_{\si\in\Si}$ for the cocycle relation to hold, without modifying the way these gauge transformations act on $\cA_E$.

\subsection{Representations of the fundamental group}\label{rep_fund_gp_section}

Let $x$ be a point in $X$ and let $\piX$ be the fundamental group of $X$ at that point. For stable holomorphic vector bundles of degree $0$, the diffeomorphism $\cM_X(r,0) \simeq F^{-1}(\{0\})_\irr/\cG_E$ also reads $$\cM_X(r,0)\simeq \Hom(\piX;\U(r))_\irr / \U(r)\,.$$ Explicitly, given a representation $\rho$ of $\piX$ into the maximal compact subgroup $K:=\U(r)$ of $G_\C=\GL(r;\C)$, there is a natural $\piX$-action on the product bundle $\Xt_x\times\C^r$ of the universal covering space $\Xt_x$ of $X$ (canonically determined by the choice of the base point $x$) with the vector space $\C^r$: for all $\ga\in\piX$, all $\xi\in \Xt_x$ and all $v\in\C^r$, $$\ga\cdot (\xi,v) := (\ga\cdot\xi,\rho(\ga)\cdot v),$$ where $\piX$ acts on $\piX$ in the canonical way and on $\C^r$ via the representation $\rho$. If $\rho:\piX\lra \U(r)$ is irreducible (meaning that there is no non-trivial sub-space $W\subset\C^r$ invariant under all transformations in $\mathrm{Im}\,\rho$, or equivalently that $\rho$ is not the direct sum of two unitary representations, or yet again that the stabiliser of $\rho$ in $\U(r)$ is reduced to $\cZ(\U(r))\simeq S^1$), then the induced rank $r$ vector bundle $$\cE_{\rho}:=(\Xt_x\times\C^r)/ \piX$$ over $X$ is stable, of degree $0$. Conversely, any such bundle comes in that way from an irreducible unitary representation of $\piX$ (\cite{NS}). 

Recall from Section \ref{alg_geo_pic} that $\Si\subset\Aut^\pm(X)$ is a finite group which also acts on $G_\C$ via the group homomorphism $\eps:\Si\lra \Z/2\Z$ constructed in \eqref{construction_eps} and the anti-holomorphic involution $\theta$ of $G_\C$. Conjugating $\theta$ by an inner automorphism of $G_\C$ if necessary, we can assume that $\theta$ preserves the maximal compact subgroup $K$ of $G_\C$. Therefore, there is an induced group homomorphism $$\Si\lra \Out(K)\times \Out(\piX),$$ where $\Out(H) := \Aut(H)/\Int(H)$ denotes the outer automorphism group of a given group $H$. Note that, since $X$ is compact connected of genus $g\geq 2$, we can identify $\Out(\piX)$ with the mapping class group of $X$ and then the homomorphism $\Si\lra\Out(\piX)$ just takes a transformation $\si:X\lra X$ to its mapping class. Since moreover $\Out(K)\times \Out(\piX)$ acts on the representation space $\Hom(\piX;K)_\irr/K$ via $$([\psi],[f])\cdot [\rho] := [\psi\circ \rho\circ f^{-1}],$$ where we identify $\pi_1(X;f(x))$ and $\piX$ via the choice of a path from $x$ to $f(x)$. Consequently, there is an induced $\Si$-action on the representation space $\Hom(\piX;K)_\irr /K$, defined by $$\si\cdot[\rho] := [\theta_{\si}\circ\rho\circ \si^{-1}]=:[\si(\rho)].$$ In particular, the holomorphic vector bundle $\cE_{\si(\rho)}$ is isomorphic to $\si(\cE_\rho)$.

To sum up, we have three ways of thinking about the moduli space $\cM_X(r,0)$:
\begin{enumerate}
\item the space of isomorphism classes of stable holomorphic vector bundles of rank $r$ and degree $0$ over $X$;
\item the space of gauge orbits of irreducible unitary connections on a fixed Hermitian vector bundle of rank $r$ and degree $0$ over $X$;
\item the space of irreducible representations $\piX\lra \U(r)$.
\end{enumerate}
\noindent Then, given a finite group $\Si\subset\Aut^\pm(X)$ of either holomorphic or anti-holomorphic transformations of $X$, there is an induced $\Si$-action on $\cM_X(r,0)$ that we can describe in each of the three pictures above (which amounts to saying that the known diffeomorphisms between those three spaces are $\Si$-equivariant). Perhaps noteworthy is the fact that it is slightly more involved to define the induced $\Si$-action on the space $F^{-1}(\{0\})_\irr /\cG_E$ than in the other two models of $\cM_X(r,0)$, because it involves a choice of an extra structure, namely a $c$-equivariant structure on $E$ (Definition \ref{c-twisted_lift}) in order to define the induced action on the moduli space, making it necessary to check that this action is independent of that choice. This might be seen as a drawback at first but, as we shall see in the next two sections, it holds the key to two important things:
\begin{enumerate}
\item the modular interpretation of $\Si$-fixed points in $\cM_X(r,0)$;
\item the Narasimhan-Seshadri correspondence in this context.
\end{enumerate}
 A further application of the gauge-theoretic point of view would be the study of the topology of that fixed-point set 
but we shall not pursue this here.
 
\section{Modular interpretation of the fixed-point set}\label{modular_interp_section}

\subsection{Equivariant structures on holomorphic vector bundles}

As mentioned at the end of Section \ref{rep_fund_gp_section}, the gauge-theoretic perspective on finite group actions on moduli spaces of stable holomorphic vector bundles holds the key to the modular interpretation of the fixed points of such an action. Indeed, we will now equip holomorphic vector bundles with generalised equivariant structures that mirror, in the holomorphic category, the $c$-equivariant structures introduced in Definition \ref{c-twisted_lift}. 

\begin{definition}\label{c-equiv_struct}
Let $X$ be a Riemann surface equipped with a finite group action of $\Si\subset \Aut^\pm(X)$. Let $\cE$ be a holomorphic vector bundle over $X$ and fix a group $2$-cocycle $c:\Si\times \Si\lra \C^*\subset\Aut(\cE)$. A $c$-equivariant structure on $\cE$ is a family $(\tau_\si)_{\si\in\Si}$ of bundle maps from $\cE$ to itself satisfying the conditions:
\begin{enumerate}
\item For all $\si\in\Si$, we have a commutative diagram $$\begin{CD}
E @>{\tau_\si}>> E \\
@VVV @VVV\\
X @>{\si}>> X
\end{CD}$$
\item If $\si:X\lra X$ is holomorphic, the map $\tau_\si:E\lra E$ is holomorphic and fibrewise $\C$-linear. If $\si:\lra X$ is anti-holomorphic,  the map $\tau_\si:E\lra E$ is anti-holomorphic and fibrewise $\C$-anti-linear.
\item $\tau_{1_\Si}=\Id_E$ and, for all $(\si_1,\si_2)\in\Si\times \Si$, $\tau_{\si_1}\tau_{\si_2} = c(\si_1,\si_2) \tau_{\si_1\si_2}$.
\end{enumerate}
A pair $(\cE,(\tau_\si)_{\si\in\Si})$ where $\cE$ is a holomorphic vector bundle and $(\tau_\si)_{\si\in\Si}$ is a $c$-equivariant structure on $\cE$ will be called a $c$-equivariant holomorphic vector bundle. A homomorphism between two $c$-equivariant vector bundles $(\cE,(\tau_\si)_{\si\in\Si})$ and $(\cE',(\tau'_\si)_{\si\in\Si})$ (for the same cocycle $c$) is a homomorphism of holomorphic vector bundles $f:\cE\lra \cE'$ over $X$ such that, for all $\si\in \Si$, $f\circ\tau_\si= \tau'_\si\circ f$.
\end{definition}

Recall that when $c$ is the trivial cocycle, a $c$-equivariant structure is just a lifting of the $\Si$-action on $X$ to a $\Si$-action on $\cE$. Also, Definition \ref{c-equiv_struct} is equivalent to asking that there exist a family $(\phi_\si)_{\si\in \cE}$ of (normalised) isomorphisms satisfying the conditions:

\begin{enumerate}
\item For all $\si\in\Si$, we have a commutative diagram $$\xymatrix{
\si(\cE) \ar[rr]^{\phi_\si} \ar[dr] & & \cE \ar[dl]\\
&X &}$$
\item The map $\phi_\si:\si(\cE)\lra \cE$ is holomorphic and fibrewise $\C$-linear.
\item $\phi_{1_{\si}} = \Id_{\cE}$ and, for all $(\si_1,\si_2)\in\Si\times\Si$, $\phi_{\si_1}\si_1(\phi_{\si_2}) = c(\si_1,\si_2)\phi_{\si_1\si_2}$.
\end{enumerate}
\noindent When $c$ is the trivial cocycle, this should really be thought of as a \textit{descent datum}: it is a necessary and sufficient condition for the holomorphic vector bundle $\cE$ to descend to a vector bundle in the appropriate category over the orbifold $[X/\Si]$.

Stability of a $c$-equivariant holomorphic vector bundle $(\cE,\tau:=(\tau_\si)_{\si\in\Si})$ may be defined in the usual way, with respect to non-trivial $\tau$-invariant sub-bundles only. This is in general not equivalent to the underlying holomorphic vector bundle $\cE$ being stable (see \cite{Ramanan_hyperelliptic} for the case of a holomorphic involution and \cite{Sch_JSG} for the case of an anti-holomorphic one). But this is actually a good thing because it gives us enough stable objects for the Seshadri theorem (\cite{Seshadri}) to hold: any semi-stable object admits filtrations by $\tau$-invariants sub-objects whose successive quotients are stable (in the weaker, $\tau$-relative, sense, but in general not in the strong sense of the underlying bundle being stable) and of equal slope. This is however not something that we will pursue in this paper (see \cite{Sch_JSG} for the special case of anti-holomorphic involutions). Instead, as in \cite{HN}, we shall focus on the following family of objects.

\begin{definition}
A $c$-equivariant holomorphic vector bundle $(\cE,(\tau_\si)_{\si\in\Si})$ is called \emph{geometrically stable} if the underlying vector bundle $\cE$ is stable in the usual sense. It is called \emph{polystable} if it is a direct sum of geometrically stable $c$-equivariant holomorphic vector bundles of equal slope.
\end{definition}

The rest of Section \ref{modular_interp_section} is devoted to proving the existence of geometrically stable $c$-equivariant vector bundles and constructing connected (gauge-theoretic) moduli spaces for such objects.

\subsection{Invariant connections}\label{inv_conn_section}

We saw in Section \ref{gauge_theoretic_picture} that, given a $c$-equivariant Hermitian vector bundle $E$ over $(X,\Si)$, there was a $\Si$-action on the space $\cA_E$ of unitary connections on $E$, that was induced by the $\Si$-action on $X$ (Proposition \ref{Si_action_on_conn}). That $\Si$-action on $\cA_E$ was compatible with the Hamiltonian action of the gauge group $\cG_E$ in the sense of Property \eqref{comp_condition_group_actions} and with the momentum map $F$ of that action. Back in Section \ref{gauge_theoretic_picture}, this was only used to deduce that there was an induced $\Si$-action on $F^{-1}(\{0\})_\irr/\cG_E$, or more generally on $F^{-1}(\{\frac{d}{r}\})/\cG_E$ (Proposition \ref{Si_action_on_Hamil_quotient}). One can be slightly more general if one allows holomorphic principal $G_\C$-bundles, for $G_\C$ an arbitrary reductive complex Lie group, into the picture. Indeed, let us denote by $\theta$ the split real form of $G_\C$, by $K$ a $\theta$-invariant maximal compact subgroup of $G_\C$, and by $P$ a principal $K$-bundle. Recall that $\Si$ acts on $K$ via the homomorphism $\eps:\Si\lra\Z/2\Z$ of \eqref{construction_eps} and the involution $\theta$ (as in \eqref{Si_action_on_G_def}).

\begin{definition}\label{c_equiv_ppal_bdle}
Given a $2$-cocycle $c\in Z^2(\Si;\cZ(K))$, a $c$-equivariant structure on $P$ is a family $\tau:=(\tau_\si)_{\si\in\Si}$ of (normalised) bundle maps from $P$ to itself satisfying the conditions:
\begin{enumerate}
\item For all $\si\in\Si$, we have a commutative diagram $$\begin{CD}
P @>{\tau_\si}>> P \\
@VVV @VVV\\
X @>{\si}>> X
\end{CD}$$
\item For all $\si\in\Si$, all $p\in P$ and all $k\in K$, one has $\tau_\si(k\cdot p) = \theta_\si(k)\cdot \tau_\si(p)$.
\item $\tau_{1_\Si}=\Id_P$ and, for all $(\si_1,\si_2)\in\Si\times \Si$, $\tau_{\si_1} \tau_{\si_2} = c(\si_1,\si_2) \tau_{\si_1\si_2}$.
\end{enumerate}
Equivalently, we can think of $(\tau_\si)_{\si\in\Si}$ as a family $(\phi_\si:\si(P)\lra P)_{\si\in\Si}$ of principal $K$-bundles isomorphisms satisfying $\phi_{\si_1}\si_1(\phi_{\si_2}) = c(\si_1,\si_2)\phi_{\si_1\si_2}$. When $c\equiv1$, a $c$-equivariant structure $\tau$ will simply be called an equivariant structure.

\noindent A homomorphism between two $c$-equivariant bundles $(P,(\tau_\si)_{\si\in\Si})$ and $(P',(\tau'_\si)_{\si\in\Si})$ (for the same cocycle $c$) is a homomorphism of principal bundles $f:P\lra P'$ over $X$ such that, for all $\si\in \Si$, $f\circ\tau_\si= \tau'_\si\circ f$.
\end{definition}

For instance, the category of $c$-equivariant principal $G_\C$-bundles for the group $G_\C=\GL(r;\C)$, $\theta(g)=\ov{g}$, $\Si\simeq\Z/2\Z$ acting on $X$ via an anti-holomorphic involution and $c\equiv 1$ is equivalent to the category of rank $r$ Real vector bundles in the sense of Atiyah (\cite{Atiyah_reality}), because of the compatibilty relation $\ov{gv} = \ov{g}\,\ov{v}$ for the standard $\GL(r;\C)$-action on $\C^r$. 

Under the assumption of Definition \ref{c_equiv_ppal_bdle}, there is an induced $\Si$-action $(\beta^{\tau}_\si)_{\si\in\Si}$ on the space $\cA_P$ of $K$-connections on $P$, defined as in \eqref{Si_action_on_conn_def}. There is also a $\Si$-action $(\alpha^{\tau}_\si)_{\si\in\Si}$ on the gauge group $\cG_P$, which is compatible with $(\beta^{\tau}_\si)_{\si\in\Si}$ in the sense of \eqref{comp_condition_group_actions}. The assumption that $\theta$ is induced on $K$ by the split real form of $G_\C$ is then sufficient to guarantee that the $\Si$-action on $\cA_P$ preserves the set of projectively flat connections (which was for instance not always true for the real form $\theta(g)=(\ov{g}^t)^{-1}$of $G_\C=\GL(r;\C)$, as we saw in Section \ref{alg_geo_pic}). In what follows, we consider an element $\xi$ in the centre of the Lie algebra of $\cG_P$ such that the set $F^{-1}(\{\xi\})$ of projectively flat connections of curvature $\xi$ is non-empty. That set carries an induced $\Si$-action and, because of Relation \eqref{comp_condition_group_actions}, the fixed-point group $\cG_P^{(\Si,\tau)}$ acts on $\cA_P^{(\Si,\tau)}$, preserving $F^{-1}(\{\xi\})_\irr^\Si$. Note that the $\Si$-actions on $\cG_P$ and $\cA_P$ depend on the choice of the $c$-equivariant structure $\tau$ on $P$, as reflected in our notation. We can then form the quotient $$F^{-1}(\{\xi\})_\irr^{(\Si,\tau)} / \cG_P^{(\Si,\tau)}$$ and we have that the inclusion map $F^{-1}(\{\xi\})^{(\Si,\tau)}\hookrightarrow F^{-1}(\{\xi\})$ induces a map \begin{equation}\label{moduli_spaces_of_inv_conn}
j_{\tau}: F^{-1}(\{\xi\})_\irr^{(\Si,\tau)} / \cG_P^{(\Si,\tau)} \lra \left(F^{-1}(\{\xi\}) / \cG_P\right)^\Si\ ,
\end{equation} taking the $\cG_P^{(\Si,\tau)}$-orbit of a $\Si$-invariant connection to its $\cG_P$-orbit. As we saw in Proposition \ref{induced_action_is_canonical}, the target space of the map $j_{\tau}$ is independent of the choice of the $2$-cocycle $c\in Z^2(\Si;\cZ(K))$ and of the $c$-equivariant structure $(\tau_\si)_{\si\in\Si}$ on $P$. The source space, however, very much depends on those choices and the rest of this section is devoted to the study of the map $j_{\tau}$, namely to understanding its fibres and its image (we will show that $j_{\tau}$ is neither injective nor surjective in general). In Section \ref{moduli_of_equiv_bdles}, we will provide a modular interpretation of points in the quotient $F^{-1}(\{\xi\})_\irr^\Si / \cG_P^{(\Si,\tau)}$.

The fibres of $j_{\tau}$ are relatively easy to understand. Let $A_1$ and $A_2$ be two $\Si$-fixed connections in $F^{-1}(\{\xi\})_\irr$ such that $A_1$ and $A_2$ lie in the same $\cG_P$-orbit, i.e.\ $j_{\tau}(\cG_P^{(\Si,\tau)}\cdot A_1) = j_{\tau}(\cG_P^{(\Si,\tau)}\cdot A_2)$. Then there is an element $g\in \cG_P$ such that $g\cdot A_2 = A_1$. So, for all $\si\in \Si$, $$g^{-1}\cdot A_1 = A_2 = \beta^{\tau}_\si(A_2) = \beta^{\tau}_\si(g^{-1}\cdot A_1) = \alpha^{\tau}_\si(g^{-1})\cdot \beta^{\tau}_\si(A_1) = \alpha^{\tau}_\si(g^{-1})\cdot A_1,$$ which implies that $g\alpha^{\tau}_\si(g^{-1})\in \mathrm{Stab}_{\cG_P}(A_1)=\cZ(\cG_P)$. The map $\si \lmt g\alpha^{\tau}_\si(g^{-1})$ thus defined is readily seen to be a (normalised) $\cZ(\cG_P)$-valued $1$-cocycle on $\Si$, whose cohomology class is independent of the choice of the element $g$. Moreover, by definition, that cocycle splits over $G_\C$. Thus, we have defined a map $$f:j_{\tau}^{-1}(\{\cG_P\cdot A_1\}) \lra \ker \big(H^1(\Si;\cZ(\cG_P)) \lra H^1(\Si;\cG_P)\big).$$

\begin{proposition}\label{fibre_j_c_tau}
The non-empty fibres of $j_{\tau}$ are in bijection with the pointed set $$\ker \big(H^1(\Si;\cZ(\cG_P)) \lra H^1(\Si;\cG_P)\big).$$
\end{proposition}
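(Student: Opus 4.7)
The plan is to show that the map $f$ defined just before the proposition statement is a well-defined bijection onto the kernel in question. I would organise the argument into three steps: well-definedness, injectivity, and surjectivity. Throughout, the crucial input is irreducibility of $A_1$, which gives $\mathrm{Stab}_{\cG_P}(A_1)=\cZ(\cG_P)$, together with the compatibility \eqref{comp_condition_group_actions} between $\alpha$ and $\beta$.

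\textbf{Well-definedness.} Starting from $A_1, A_2\in F^{-1}(\{\xi\})_\irr^\Si$ in the same $\cG_P$-orbit, I pick $g\in\cG_P$ with $g\cdot A_2 = A_1$ and define $c_\si:=g\alpha_\si(g^{-1})$. The computation displayed in the text shows $c_\si\in\cZ(\cG_P)$. I then verify the 1-cocycle condition $c_{\si_1\si_2}=c_{\si_1}\alpha_{\si_1}(c_{\si_2})$ by directly expanding $g\alpha_{\si_1\si_2}(g^{-1})=g\alpha_{\si_1}(\alpha_{\si_2}(g^{-1}))$ and inserting $\alpha_{\si_1}(g^{-1})\alpha_{\si_1}(g)=1$. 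By construction, $c_\si$ is the coboundary of $g$ viewed in $\cG_P$, so $[c]$ lies in the kernel. Replacing $g$ by $g'\in\cG_P$ with $g'\cdot A_2=A_1$ forces $g'g^{-1}\in\mathrm{Stab}(A_1)=\cZ(\cG_P)$, so $g'=zg$ with $z$ central, and a short calculation gives $c'_\si=z\,c_\si\,\alpha_\si(z^{-1})$, a coboundary in $\cZ(\cG_P)$. Finally, replacing $A_2$ by $h\cdot A_2$ with $h\in\cG_P^\Si$ replaces $g$ by $gh^{-1}$; since $\alpha_\si(h)=h$, the cocycle $c_\si$ is unchanged.

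\textbf{Injectivity.} Suppose $A_2,A'_2\in F^{-1}(\{\xi\})_\irr^\Si$ give the same class in the kernel. Choose $g,g'$ with $g\cdot A_2=A_1=g'\cdot A'_2$, and set $c_\si=g\alpha_\si(g^{-1})$, $c'_\si=g'\alpha_\si(g'^{-1})$. The hypothesis gives some $z\in\cZ(\cG_P)$ with $c_\si=z^{-1}c'_\si\alpha_\si(z)$. Let $h:=g'^{-1}g\in\cG_P$, so that $A'_2=h\cdot A_2$. A direct substitution, using centrality of $z$ to commute it past $g'$, yields $h\alpha_\si(h^{-1})=z^{-1}\alpha_\si(z)$, and hence $(zh)\alpha_\si((zh)^{-1})=1$. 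Thus $zh\in\cG_P^\Si$ and $(zh)\cdot A_2=h\cdot A_2=A'_2$, so $A_2$ and $A'_2$ define the same $\cG_P^\Si$-orbit.

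\textbf{Surjectivity.} Given $[c]\in\ker\bigl(H^1(\Si;\cZ(\cG_P))\to H^1(\Si;\cG_P)\bigr)$, pick a cocycle representative $c_\si$ and $g\in\cG_P$ with $c_\si=g\alpha_\si(g^{-1})$. Define $A_2:=g^{-1}\cdot A_1$. Using \eqref{comp_condition_group_actions} and $\Si$-invariance of $A_1$,
\[
\beta_\si(A_2)=\alpha_\si(g^{-1})\cdot\beta_\si(A_1)=\alpha_\si(g^{-1})\cdot A_1 = g^{-1}\bigl(g\alpha_\si(g^{-1})\bigr)\cdot A_1 = g^{-1}c_\si\cdot A_1 = g^{-1}\cdot A_1 = A_2,
\]
since $c_\si\in\cZ(\cG_P)$ acts trivially. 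So $A_2\in F^{-1}(\{\xi\})_\irr^\Si$, lies in the $\cG_P$-orbit of $A_1$, and $f([A_2])=[c]$ by construction.

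The only substantive obstacle is bookkeeping: ensuring the cohomology conventions match (so that changes of $g$ yield genuine coboundaries rather than anti-coboundaries) and checking at each step that central elements can be freely commuted past gauge transformations. Once irreducibility is used to pin down the stabilisers, every step reduces to a short manipulation of 1-cocycles.
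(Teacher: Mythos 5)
Your argument is correct and follows essentially the same route as the paper: the same map $f$ sending $\cG_P^\Si\cdot A_2$ to the class of the $1$-cocycle $\si\lmt g\alpha_\si(g^{-1})$, with surjectivity obtained by twisting $A_1$ by a splitting $g$ of a given cocycle and injectivity by absorbing a central coboundary into $g$. The only difference is one of thoroughness: you verify full injectivity (comparing two arbitrary fibres of $f$) and spell out the well-definedness checks, whereas the paper only records that the cocycle class is independent of $g$ and that the preimage of the trivial class is a single $\cG_P^\Si$-orbit; your version is a welcome tightening but not a different proof.
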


\begin{proof}
It suffices to check that the map $f$ defined above is bijective. The surjectivity follows from the construction of the map $f$ and the definition of $$\ker \big(H^1(\Si;\cZ(\cG_P)) \lra H^1(\Si;\cG_P)\big).$$ As for the injectivity, if the $\cZ(\cG_P)$-valued $1$-cocycle $g\alpha^{\tau}_\si(g^{-1})$ actually splits over $\cZ(\cG_P)$, i.e.\ if there exists an element $a\in\cZ(\cG_P)$ such that, for all $\si\in\Si$, $g\alpha^{\tau}_\si(g^{-1}) = a\alpha^{\tau}_\si(a^{-1})$, then $\alpha^{\tau}_\si(a^{-1}g)= a^{-1}g$ and $(a^{-1}g)^{-1}\cdot A_1 = g^{-1}\cdot (a^{-1}\cdot A_1) = g^{-1}\cdot A_1= A_2$, since $\cZ(\cG_P)$ acts trivially on $\cA_P$. This proves that $A_1$ and $A_2$ lie in the same $\cG_P^{(\Si,\tau)}$-orbit, hereby showing that $f$ is injective.
\end{proof}

\noindent We note that nothing in the above uses that $\mathrm{Stab}_{\cG_P}(A)=\cZ(\cG_P)$ and works in full generality: we have proved that the fibre of $j_{\tau}$ above the point $j_{\tau}(\cG_P^{(\Si,\tau)}\cdot A)$ is in bijection with $$\ker \big(H^1(\Si;\mathrm{Stab}_{\cG_P}(A)) \lra H^1(\Si;\cG_P)\big).$$ Next, to study the image of $j_{\tau}$ for all $c\in Z^2(\Si;\cZ(K))$, let us introduce a map $$\cT: \left(F^{-1}(\xi\})_\irr / \cG_P\right)^\Si \lra H^2(\Si;\cZ(K)),$$ that we will call the \textit{type map}. This will explicitly use the fact that $\mathrm{Stab}_{\cG_P}(A)=\cZ(\cG_P)$, if only for the target space of the type map to make sense. In order to define the map $\cT$, we start with an actual equivariant structure on $P$, i.e.\ we assume that there is given a family $(\tau_\si)_{\si\in\Si}$ satisfying $\tau_{\si_1\si_2}=\tau_{\si_1}\tau_{\si_2}$ on $P$. This enables us to identify $\cZ(\cG_P)$ with $\cZ(K)$ $\Si$-equivariantly (with respect to the $\Si$-action $(\alpha^{\tau}_\si)_{\si\in\Si}$ on $\cG_P$ and the $\Si$-action $(\theta_\si)_{\si\in\Si}$ on $\cZ(K)$). Then we let $A_0$ be an irreducible, projectively flat connection on $P$, whose $\cG_P$-orbit is $\Si$-fixed in the quotient $F^{-1}(\{\xi\})/\cG_P$. This means that, for all $\si\in\Si$, there exists $u_\si\in \cG_P$ such that $u_\si\cdot\beta^{\tau}_\si(A_0)=A_0$ (since $\beta^{\tau}_{1_\Si}=\mathrm{Id}_{\cA_P}$, we can take $u_{1_\Si}=1_{\cG_P}$). Hence, on the one hand, $\beta^{\tau}_{\si_1\si_2}(A_0) = u_{\si_1\si_2}^{-1}\cdot A_0$ and, on the other hand, $$\beta^{\tau}_{\si_1\si_2}(A_0) = \beta^{\tau}_{\si_1}(\beta^{\tau}_{\si_2}(A_0)) = \beta^{\tau}_{\si_1}(u_{\si_2}^{-1}\cdot A_0) = \alpha^{\tau}_{\si_1}(u_{\si_2}^{-1})\cdot \beta^{\tau}_{\si_1}(A_0)= \alpha^{\tau}_{\si_1}(u_{\si_2}^{-1})u_{\si_1}^{-1}\cdot A_0.$$ So, for all $(\si_1,\si_2)\in \Si\times \Si$, the element $c_u(\si_1,\si_2) := u_{\si_1} \alpha^{\tau}_{\si_1}(u_{\si_2})u_{\si_1\si_2}^{-1}$ lies in $\mathrm{Stab}_{\cG_P}(A_0)=\cZ(\cG_P)$. The map $c:\Si\times\Si\lra \cZ(\cG_P)$ thus defined is readily seen to be a (normalised) $2$-cocycle, whose cohomology class in $H^2(\Si;\cZ(\cG_P))$ only depends on the $\Si$-invariant orbit $[A_0]:=\cG_P\cdot A_0$, not the connection $A_0$ or the elements $(u_\si)_{\si\in\Si}$. Since $\cZ(\cG_P)$ is $\Si$-equivariantly isomorphic to $\cZ(K)$, we have $H^2(\Si;\cZ(\cG_P)) \simeq H^2(\Si;\cZ(K))$ and the above construction defines a map \begin{equation}\label{type_map_def}\cT:\begin{array}{ccc}
(F^{-1}(\{\xi\})_\irr / \cG_P)^\Si & \lra & H^2(\Si;\cZ(K))\\
\left[A_0\right] & \lmt & [c]
\end{array}\end{equation} where $c(\si_1,\si_2)=u_{\si_1}\alpha^{\tau}_{\si_1}(u_{\si_2}) u_{\si_1\si_2}^{-1}$ is defined from any family $(u_\si)_{\si\in\Si}$ satisfying, for all $\si\in\Si$, $\beta^{\tau}_\si(A_0)=u_\si^{-1}\cdot A_0$. Note that, if the $\cG_P$-orbit of $A_0$ contains a connection $A_0'$ which is actually $\Si$-fixed, then $\cT([A_0])=[1]$ in $H^2(\Si;\cZ(K))$. Indeed, for all $\si\in\Si$, $\beta^{\tau}_\si(A_0') = A_0'$, so we can take $u'_{\si}=1$ and we get that $c(\si_1,\si_2) =1$. Therefore, when $c\equiv1$ and an equivariant structure $\tau=(\tau_\si)_{\si\in\Si}$ has been chosen on $P$, the map $$j_{\tau}: F^{-1}(\{\xi\})_\irr^\Si / \cG_P^{(\Si,\tau)} \lra \left(F^{-1}(\{\xi\}) / \cG_P\right)^\Si,$$ where the $\Si$-action on the left-hand side is induced by the equivariant structure $\tau$ as in \eqref{Si_action_on_conn_def}, satisfies \begin{equation}\label{map_to_the_trivial_fibre_of_the_type_map}\mathrm{Im}\,j_{\tau} \subset \cT^{-1}(\{[1]\}).\end{equation} 
\begin{remark}
By construction, the type map $$\cT:(F^{-1}(\{\xi\})/\cG_P)^{\Si} \lra H^2(\Si;\cZ(\cG_P))$$ lifts to a map $\cT':(F^{-1}(\{\xi\})/\cG_P)^{\Si} \lra H^1(\Si;\cG_P/\cZ(\cG_P))$, i.e.\ factors through the connecting homomorphism $\delta: H^1(\Si;\cG_P/\cZ(\cG_P)) \lra H^2(\Si;\cZ(\cG_P))$ induced by the short exact sequence of group homomorphisms $$1\lra \cZ(\cG_P) \lra \cG_P \lra \cG_P/\cZ(\cG_P)\lra 1.$$
\end{remark}
Two natural questions then are:
\begin{itemize}
\item Is $\mathrm{Im}\,j_{\tau}$ equal to $\cT^{-1}(\{[1]\})$?
\item What can one say about the other fibres of the type map $\cT$?
\end{itemize} The answer to both these questions is provided by the next construction. Let $\lambda\in H^2(\Si;\cZ(K))$ and assume that $\lambda$ lies in the image of the map $\cT$. This means that there exists a $\Si$-invariant $\cG_P$-orbit $[A_0]$ and a family $(u_\si)_{\si\in\Si}$ of elements of $\cG_P$ such that, for all $\si\in \Si$, $u_\si\cdot\beta^{\tau}_\si(A_0) = A_0$ and, moreover, the $2$-cocycle on $\Si$ defined by $c_u(\si_1,\si_2):= u_{\si_1}\alpha^{\tau}_{\si_1}(u_{\si_2}) u_{\si_1\si_2}^{-1}$ has cohomology class $\lambda$. Let us then set, for all $\si\in \Si$, $\phi'_\si:=u_\si\phi_\si:\si(P)\lra P$, $$\beta^{\tau,u}_\si: \begin{array}{rcl} \cA_P & \lra & \cA_P\\ A & \lmt & u_\si\beta^{\tau}_\si (A)\end{array}\quad \mathrm{and}\quad \alpha^{\tau,u}_\si: \begin{array}{rcl} \cG_P & \lra &\cG_P \\ g & \lmt & u_\si \alpha^{\tau}_\si(g) u_\si^{-1}\end{array}\ .$$ Then, for all $(\si_1,\si_2)\in\Si\times\Si$, \begin{eqnarray*} \phi'_{\si_1} \si_1(\phi'_{\si_2}) & = & u_{\si_1} \phi_{\si_1} \si_1(u_{\si_2}\phi_{\si_2})\\
& = & u_{\si_1} (\phi_{\si_1}\si_1(u_{\si_2}) \phi_{\si_1}^{-1}) \phi_{\si_1} \si_1(\phi_{\si_2})\\
& = & \big(u_{\si_1}\alpha^{\tau}_{\si_1}(u_{\si_2})\big) \big(\phi_{\si_1} \si_1(\phi_{\si_2})\big) \\
& = & \big(c_u(\si_1,\si_2) u_{\si_1}u_{\si_2}\big) \phi_{\si_1\si_2}\\
& = & c_u(\si_1,\si_2) \phi'_{\si_1\si_2}
\end{eqnarray*} i.e.\ the family $(\phi'_\si)_{\si\in\Si}$ defines a $c_u$-equivariant structure on $P$ (in the sense of Definition \ref{c_equiv_ppal_bdle}). Moreover, we have new $\Si$-actions on $\cA_P$ and $\cG_P$, respectively given by \begin{equation}\label{new_action_on_conn}\beta^{\tau,u}_\si(A) = \big((\phi'_\si)^{-1}\big)^*\si(A) = \big((u_\si\phi_\si)^{-1}\big)^* \si(A) = (u_\si^{-1})^*(\phi_\si^{-1})^*\si(A) = u_\si\cdot \beta^{\tau}_\si(A)\end{equation} and \begin{equation}\label{new_action_on_gauge_gp}\alpha^{\tau,u}_\si(A) = \phi'_\si\si(g)(\phi'_\si)^{-1} = u_\si\big(\phi_\si\si(g)\phi_\si^{-1}\big)u_\si^{-1} = u_\si\alpha^{\tau}_\si(g) u_\si^{-1}=u_\si \tau_\si g \tau_\si^{-1}u_\si^{-1}.\end{equation} We then have, for the connection $A_0$ we started with, $$\beta^{\tau,u}_\si(A_0) = u_\si \cdot \beta^{\tau}_\si(A_0) = u_\si \cdot (u_\si^{-1}\cdot A_0) = A_0,$$ i.e.\ $A_0$ is fixed for this new action, which we denote by $$A_0\in \cA_P^{(\Si,\tau,u)}.$$ The inclusion $F^{-1}(\{\xi\})_{\mathrm{irr}}^{(\Si,\tau,u)} \hookrightarrow F^{-1}(\{\xi\})_{\mathrm{irr}}$ induces a map 
\begin{equation}\label{moduli_spaces_of_inv_conn_modif_fmly}
j_{\tau,u} : F^{-1}(\{\xi\})_{\mathrm{irr}}^{(\Si,\tau,u)} / \cG_P^{(\Si,\tau,u)} \lra \big(F^{-1}(\{\xi\})_{\mathrm{irr}} / \cG_P\big)^{\Si}
\end{equation} which generalises the map $j_{\tau}$ introduced in \eqref{moduli_spaces_of_inv_conn} and where $\Si$ now acts on $\cG_P$ via the $\alpha^{\tau,u}_\si$ above.

As we can see, the new $\Si$-actions on $\cA_P$ and $\cG_P$, introduced respectively in \eqref{new_action_on_conn} and  \eqref{new_action_on_gauge_gp}, are entirely determined by the choice of a family $u=(u_\si)_{\si\in\Si}$ such that $c_u(\si_1,\si_2):= u_{\si_1}\alpha^\tau_{\si_1}(u_{\si_2}) u_{\si_1\si_2}^{-1}$ is a $\cZ(\cG_P)$-valued $2$-cocycle. These families already appeared to define the type map in \eqref{type_map_def}, so we give them a name.

\begin{definition}[Modifying family]\label{modif_fmly_def}
A modifying family of elements in $\cG_P$ is a tuple $u:=(u_\si)_{\si\in\Si}$ 
of elements $u_\sigma \in \cG_P$ indexed by $\si\in \Si$ such that
\begin{enumerate}
\item $u_{1_{\Si}} = 1_{\cG_P}$,
\item For $\si_1,\si_2 \in  \Si$, the elements $c_u(\si_1,\si_2):=u_{\si_1} \Psi_{\si_1}(u_{\si_2}) u_{\si_1\si_2}^{-1} \in \cG_P$ is $\cZ(\cG_P)$-valued. In this case, $c_u$ is actually a $2$-cocycle of $\Si$.
\end{enumerate}
\end{definition}

\noindent As a direct generalisation of \eqref{map_to_the_trivial_fibre_of_the_type_map}, we have \begin{equation*} \mathrm{Im}\,j_{\tau,u} \subset \cT^{-1}(\{[c_u]\})\end{equation*} for any modifying family with associated $2$-cocycle $c_u$. The subtle point is that the converse inclusion is not true in general, because we may also find, in that fibre, elements of $\mathrm{Im}\,j_{\tau'}$, where $\tau'$ is another $c$-equivariant structure on $P$, non-isomorphic to the previous one (this is what happens, for instance, for Real vector bundles of different topological types in \cite{Sch_JSG}) and we now analyse this phenomenon (the next paragraph runs parallel to \cite[Section3.3]{HS_quiver_autos}, where the analogous situation in the context of quiver representations is studied). 

Let us start from a normalised $2$-cocycle $c:\Si\times\Si\lra \cG_P$. Saying that the cohomology class $[c]$ lies in the image of the type map means that there exists a modifying family $u=(u_\si)_{\si\in\Si}$ whose associated $2$-cocycle $c_u$ is cohomologous to $c$. Let us then denote by $Z_u^1(\Si,\cG_P)$ the set of $\cG_P$-valued normalised $1$-cocycles with respect to the modified $\Si$-action \eqref{new_action_on_gauge_gp} on $\cG_P$. The following two lemmas have very simple proofs (see \cite[Lemmas 3.18, 3.19]{HS_quiver_autos}).

\begin{lemma}\label{equal_cocycles}
Replacing $u_\si$ with $a_\si u_\si$ where $a_\si\in\cZ(\cG_P)$ if necessary, we can assume that $c_u(\si_1,\si_2) = c(\si_1,\si_2)$.
\end{lemma}
 
\begin{lemma}\label{mod_family_and_1_cocycles}
There is a bijection $u'\lmt b_{u'}(\si):= u'_\si u_\si^{-1}$, between the set of all modifying families $u'$ such that $c_{u'}=c_u$ and the set $Z^1_u(\Si,\cG_P)$, whose inverse is the map $b\lmt u^b_{\si}:= b(\si)u_\si$. Moreover, two $1$-cocycles $b_{u'}$ and $b_{u''}$ are cohomologous in $Z^1_u(\Si;\cG_P)$ if and only if there exists $g\in\cG_P$ such that, for all $\si\in\Si$, $u''_\si=g u'_\si \alpha^\tau_{\si}(g^{-1})$.
\end{lemma}

\begin{remark}\label{cohom_implies_same_images}
If $b_1$ and $b_1$ are cohomologous $1$-cocycles in $Z^1_u(\Si;\cG_P)$, the maps $j_{\tau,u^{b_1}}$ and $j_{\tau,u^{b_2}}$ defined as in \eqref{moduli_spaces_of_inv_conn_modif_fmly} have the same images. Indeed, any $g$ satisfying $u^{b_2}_\si = g u^{b_1}_\si \alpha^\tau_\si(g^{-1})$ for all $\si\in\Si$ provides a group isomorphism $$\cG_P^{(\Si,\tau,u^{b_2})} = g\cG_P^{(\Si,\tau,u^{b_1})}g^{-1} \simeq \cG_P^{(\Si,\tau,u^{b_1})}$$ and an equivariant bijection $$F^{-1}(\{\xi\})_{\mathrm{irr}}^{(\Si,\tau,u^{b_2})} = g \cdot F^{-1}(\{\xi\})_{\mathrm{irr}}^{(\Si,\tau,u^{b_1})} \simeq F^{-1}(\{\xi\})_{\mathrm{irr}}^{(\Si,\tau,u^{b_1})}.$$
\end{remark}

\noindent We can now give a decomposition of the fibre of the type map $$\cT: \left(F^{-1}(\xi\})_\irr / \cG_P\right)^\Si\lra H^2(\Si;\cZ(K))$$ over $[c_u]$, where the indexing set is the orbit space 
$H^1_{u}(\Si;\cG_P)/H^1(\Si;\cZ(\cG_P))$: as $\cZ(\cG_P)$ is central and $\Si$-invariant in $\cG_P$, the map $(a,b)\lmt (a b)(\si) :=(a_\si b(\si))_{\si\in\Si}$ induces an action of the group $H^1_u(\Si,\cZ(\cG_P))$ on the set $H^1_u(\Si,\cG_P)$, where by $H^1_u$ we mean the first cohomology set associated to the $\Si$-action $\alpha^{\tau,u}$ on $\cG_P$ defined in \eqref{new_action_on_gauge_gp}; moreover, $H^1_{u}(\Si,\cZ(\cG_P))=H^1(\Si;\cZ(\cG_P))$ is actually independent of the choice of the family $u=(u_\si)_{\si\in\Si}$, as $\cZ(\cG_P)$ is the centre of $\cG_P$. For $[b]\in H^1_u(\Si,\cG_P)$, we shall denote its $H^1(\Si,\cZ(\cG_P))$-orbit by $\ov{[b]}$. As a side remark, this gives the following abstract classification result for $c_u$-equivariant structures on $E$.

\begin{proposition}\label{classif_of_c_equiv_structures}
Given a modifying family $u=(u_\si)_{\si\in\Si}$ with associated $2$-cocycle $c_u\in Z^2(\Si;\cZ(K))$ and an equivariant structure $(\tau_\si)_{\si\in\Si}$, the set of isomorphism classes of $c_u$-equivariant structures is in bijection with the pointed set $H^1_u(\Si;\cG_P)$.
\end{proposition}

\begin{proof}
First, recall from \eqref{Si_action_on_gauge_gp_def} that $\alpha^\tau_\si(g)=\tau_\si g\tau_\si^{-1}$. Moreover, $\tau_{\si_1}\tau_{\si_2} = \tau_{\si_1\si_2}$, since by assumption $\tau$ is an equivariant structure (for the trivial cocycle). The bijection we are looking for is then induced by the map 
\begin{equation}\label{abstract_classif_result}
\begin{array}{ccc}
Z^1_u(\Si;\cG_P) & \lra & \{c_u\mathrm{-equivariant\ structures}\}\\
b & \lmt & \tau^b_\si:= b(\si) u_\si \tau_\si = u^b_\si\tau_\si
\end{array}.
\end{equation} The family $\tau'$ thus defined is indeed a $c_u$-equivariant structure, since 
\begin{eqnarray*}
\tau^b_{\si_1}\tau^b_{\si_2} & = & u'_{\si_1} \tau_{\si_1} u'_{\si_2}\tau_{\si_2} \\
& = & u'_{\si_1} \alpha^{\tau}_{\si_1}(u'_{\si_2}) \tau_{\si_1}\tau_{\si_2}\\
& = & c_u(\si_1,\si_2) u'_{\si_1\si_2} \tau_{\si_1\si_2} \\
& = & c_u(\si_1,\si_2) \tau^b_{\si_1\si_2}
\end{eqnarray*} and replacing $b$ by the cohomologous $1$-cocycle $b'(\si):=u b(\si) u_\si \alpha^{\tau}_\si(u^{-1}) g_{\si}^{-1}$ yields the conjugate $c_u$-equivariant structure $\tau^{b'}_{\si} = u\tau^b_\si u^{-1}$.
\end{proof}

\noindent We also note that replacing $b$ by $b'(\si):=a_\si b(\si)$ in \eqref{abstract_classif_result}, where $a\in Z^1(\Si;\cZ(\cG_P))$, yields $\tau^{b'}_{\si}=a_\si\tau^b_\si$ for all $\si\in\Si$. In other words, the map \eqref{abstract_classif_result} is $H^1(\Si;\cZ(\cG_P))$-equivariant: this implies that the classification result of $c$-equivariant structures given in Proposition \ref{classif_of_c_equiv_structures} is independent of the choice of the modifying family $u$ with associated cocyle $c_u=c$. The next theorem is the main result of the paper. We refer to \cite[Theorem 3.21]{HS_quiver_autos} for a similar statement in the context of quiver representations.

\begin{theorem}\label{main_result}
Let $u$ be a modifying family of elements in $\cG_P$ in the sense of Definition \ref{modif_fmly_def} and let $c_u$ be the associated $2$-cocycle. Let $\cT$ be the type map introduced in \eqref{type_map_def}. Then there is a decomposition 
\[ \cT^{-1}(\{[c_u]\}) = \bigsqcup_{\ov{[b]} \in H^1_{u}(\Si,\cG_P) / H^1(\Si,\cZ(\cG_P))} \mathrm{Im}\, j_{\tau,u^b}\] 
where $u^b$ is the modifying family determined by $u$ and a choice of 1-cocycle $b \in [b]$ and $j_{\tau,u^b}$ is the map defined in \eqref{moduli_spaces_of_inv_conn_modif_fmly}. As in Proposition \ref{fibre_j_c_tau}, non-empty fibres of $j_{\tau,u}$ are in bijection with the pointed set $$\ker \big(H^1(\Si;\cZ(K)) \lra H^1_u(\Si;\cG_P)\big).$$
\end{theorem}

\begin{proof}
For each $[b] \in H^1_{u}(\Si,\cG_P)$, we take a representative $b$ of $[b]$ and consider the map $j_{\tau,u^b}$ defined in \eqref{moduli_spaces_of_inv_conn_modif_fmly}; the image of this map does not depend on our choice of representative by Remark \ref{cohom_implies_same_images}.
 
To show that these images cover $\cT^{-1}([c_u])$, take $\cG_P\cdot A\in \cT^{-1}([c_u])$. By definition of the type map, there exists a modifying family $u'=(u'_\si)_{\si\in\Si}$ of elements in $\cG_P$ such that $A \in F^{-1}(\{\xi\})_{\mathrm{irr}}^{(\Si,\tau,u)}$ and $[c_{u'}]=[c_u]$. By Lemma \ref{equal_cocycles}, we can assume that $c_{u'}=c_u$ and, by Lemma \ref{mod_family_and_1_cocycles}, there exists $b \in Z^1_{u}(\Si,\cG_P)$ such that $u' = u^b$. Hence, $\cG_P \cdot A \in \mathrm{Im}\, j_{\tau,u^b}$.

To prove that this union is disjoint, suppose that $\cG_P \cdot A \in \mathrm{Im}\, j_{\tau,u^{b_1}} \cap \mathrm{Im}\, j_{\tau,u^{b_2}}$. Then there exist modifying families $u_i$, for $i=1,2$, such that 
\begin{enumerate}
\item $[b_{u_i}]=[b_i] \in H^1_{u}(\Si,\cG_P)$,
\item $A \in F^{-1}(\{\xi\})_{\mathrm{irr}}^{(\Si,\tau,u_i)}$ for $i=1,2$,
\item $c_{u_1} = c_{u_2} =c_u$.
\end{enumerate}
The only one of these assertions that is not clear is the final one, which follows from the fact that if $c_{u^{b_i}}=c_u$, for $i=1,2$, and if $[b] = [b']$, then $c_{u^b} =c_{u^{b'}}$. From (2), we deduce that $a_{\si}:=u_{2,\si} u_{1,\si}^{-1} \in \mathrm{Stab}_{\cG_P}(A) = \cZ(\cG_P)$, from which we conclude that $b_{u_2,\si} = a_{\si} b_{u_1,\si}$ for all $\si$, therefore that $[b_{u_1}]$ and $[b_{u_2}]$ lie in the same $H^1(\Si,\cZ(\cG_P))$-orbit in $H^1_{u}(\Si,\cG_P)$. This completes the proof.
\end{proof}

Thus, we can completely describe the fixed-point set of the $\Si$-action on the moduli space $\cM_X(r,d)$ of stable holomorphic vector bundles of rank $r$ and degree $d$ by using the following strategy:
\begin{itemize}
\item We define a type map $\cT: \cM_X(r,d)^\Si \lra H^2(\Si;\cZ(K))$ and we decompose $\cM_X(r,d)^\Si$ into the union of fibres of $\cT$;
\item We show that any non-empty fibre $\cT^{-1}(\{[c]\})$ decomposes into a disjoint union, indexed by the pointed set $H^1_{u}(\Si;\cG_E)/H^1(\Si;\cZ(\cG_E))$, where $E$ is the unitary gauge group of a smooth Hermitian vector bundle of rank $r$ and degree $d$, equipped with a $c$-equivariant structure $\tau$ and $u=(u_\si)_{\si\in\Si}$ satisfies $u_{\si_1}\alpha^{\tau}_{\si_1}(u_{\si_2}) u_{\si_1\si_2}^{-1} = c(\si_1,\si_2)$.
\item In the decomposition above, each of the pieces of $\cT^{-1}(\{[c]\})$ is the image of a map $$j_{\tau,u^b}: F^{-1}(\{\frac{d}{r}\})_{\mathrm{irr}}^{(\Si,\tau,u)} /\cG_E^\Si \lra \big(F^{-1}(\{\frac{d}{r}\})/\cG_E\big)^\Si$$ whose non-empty fibres are all in bijection with the kernel of the natural map $$H^1(\Si;\cZ(\U(r))) \lra H^1_u(\Si;\cG_E),$$ whose source space is in fact independent of the bundle $E$.
\end{itemize}

We hope to have made a convincing case that methods of group cohomology can help clarify an otherwise intricate situation (to wit, the study of the fixed-point set of a group action on a coarse moduli space of geometric objects), while at the same time providing sufficiently general results, to be used in a variety of concrete geometric situations. For example here, the cohomology groups appearing as indexing sets in Theorem \ref{main_result} can be computed using similar techniques in examples that look \textit{a priori} different (the parabolic vector bundles of \cite{Andersen-Grove} and the Real vector bundles of \cite{Sch_JSG}, to mention a few). In \cite{HS_quiver_autos,HS_rational_points}, we apply similar techniques to construct branes in hyper-k\"ahler quiver varieties and to study arithmetic aspects of quiver representations over a non-algebraically closed field.

\subsection{Moduli of equivariant bundles}\label{moduli_of_equiv_bdles}

In Section \ref{inv_conn_section}, we saw that, given a cocycle $c\in Z^2(\Si;\cZ(K))$ and a $c$-equivariant structure $\tau=(\tau_\si)_{\si\in\Si}$ on a principal $K$-bundle $P$ (for $K$ a compact Lie group equipped with the involution coming from the split real form of $G_\C:=K_\C$), there was a map $$j_{\tau} : F^{-1}(\{\xi\})_\irr^\Si / \cG_P^{(\Si,\tau)} \lra \left(F^{-1}(\{\xi\})_\irr / \cG_P\right)^\Si$$ whose image was contained in the fibre $\cT^{-1}(\{[c]\})$ of the type map defined in \eqref{type_map_def} and whose non-empty fibres were in bijection with the pointed set $$\ker \big(H^1(\Si;\cZ(\cG_P)) \lra H^1(\Si;\cG_P)\big).$$ The goal of the present section is to provide a modular interpretation of the quotients $F^{-1}(\{\xi\})_\irr^\Si/\cG_P^{(\Si,\tau)}$, consisting of $\cG_P^{(\Si,\tau)}$-orbits of $\Si$-invariant, projectively flat, irreducible $K$-connections on $P$. First we recall from \cite{Singer} that a $K$-connection on $P$ may be seen as a holomorphic structure on the bundle $P_\C:= P\times_K K_\C$, obtained by extending the structure group of $P$ to the complexification $K_\C=G_\C$ of $K$, and that isomorphism classes of such structures are precisely the $\cG_\C$-orbits of such $K$-connections, where $\cG_\C:=\cG_P\times_{\mathrm{Ad}} K_\C$ is the complexified gauge group, i.e.\ the automorphism group of $P_\C$, acting on $K$-connections through the previous identification with holomorphic structures (hereby extending the natural action of $\cG_P$ on the space $\cA_P$ of $K$-connections on $P$). Then, for a $K$-connection $A$ on $P$ to be $\Si$-invariant means that, with respect to the holomorphic structure induced by $A$, the map $$\tau_\si^\C:\begin{array}{ccc} P_\C & \lra & P_\C\\
\left[p,g\right] & \lmt & [\tau_\si(p),\theta_\si(g)]
\end{array}$$ is holomorphic if $\si:X\lra X$ is holomorphic and anti-holomorphic if $\si$ is anti-holomorphic on $X$. Thus, a $\Si$-invariant $K$-connection $A$ on $P$ defines a $c$-equivariant structure in the holomorphic sense on the holomorphic $G_\C$-bundle $P_\C$ (simply modify Definition \ref{c_equiv_ppal_bdle} by adding the requirement that $\tau_\si^\C:P_\C\lra P_\C$ is holomorphic if $\si:X\lra X$ is holomorphic and anti-holomorphic if $\si$ is anti-holomorphic). If $A$ is moreover irreducible and projectively flat, then the holomorphic bundle $P_\C$ is simple and stable in the sense of Ramanathan (\cite{Ramanathan,Ramanathan_Subramanian}). Therefore, we can  interpret the quotient $F^{-1}(\{\xi\})_\irr^\Si / \cG_P^{(\Si,\tau)}$ as a moduli space of principal holomorphic $G_\C$-bundles equipped with an additional datum, namely a $c$-equivariant structure in the holomorphic sense. Then $F^{-1}(\{\xi\})_\irr^\Si / \cG_P^{(\Si,\tau)}$ is the space of isomorphism classes of geometrically stable, simple, $c$-equivariant holomorphic structures on the smooth principal $G_\C$-bundle $P_\C$.

Since $\Si$ is a finite group acting on the affine space $\cA_P$ by affine transformations, the fixed-point set $\cA_P^\Si$ is non-empty and the issue is to know whether it contains irreducible, projectively flat connections (in order for the moduli space $F^{-1}(\{\xi\})_\irr^\Si / \cG_P^{(\Si,\tau)}$ to be non-empty). We shall now focus on the proof of that result in the vector bundle case ($K=\U(r)$ and $\theta(g)=\ov{g}$ on $G_\C=\GL(r;\C)$). To prepare for it, let us, for notational convenience, denote by $\cC$ the space $\cA_E$ of all unitary connections / holomorphic structures on the Hermitian vector bundle $E$ and let us introduce the spaces $\Css$ and $\Cs$, consisting respectively of semi-stable and stable holomorphic structures on $E$. The $\Si$-action on $\cC$ preserves $\Css$ and $\Cs$ and we wish to show that $\Cs^\Si$ is non-empty and connected because, using results of \cite{Sch_JDG} to establish the existence of a deformation retraction from $\Cs^\Si$ onto $F^{-1}(\{\frac{d}{r}\})_\irr^\Si$, this is equivalent to saying that $F^{-1}(\{\frac{d}{r}\})_\irr^\Si$ is non-empty and connected (the existence of the deformation retraction follows from the invariance of the Yang-Mills flow under the action of $\Si$, as in Section 3.3 of \cite{Sch_JDG}, and the convergence of that flow, \cite{Dask,Rade}). First, we observe that, for $r=1$, any holomorphic line bundle on $E$ is stable so $\Cs^\Si=\cC^\Si$, which is a non-empty affine subspace of $\cC$. In particular, $\Cs^\Si$ is connected if $r=1$. We henceforth assume that $r\geq 2$. Recall that the genus $g$ of $X$ is assumed to greater or equal to $2$ and that we have a group homomorphism $\eps:\Si\lra \Z/2\Z$ (defined in \eqref{construction_eps}), whose kernel we shall denote by $\Si^+$. Consider then the Harder-Narasimhan stratification of $\cC$ (\cite{HN}): $\cC$ is the disjoint union of the strata $\cC_\mu$ where $\mu=((r_i,d_i))_{1\leq i\leq \ell}$ represents the rank and degree of the successive quotients of the Harder-Narasimhan filtration $0=\cE_0\subset \cE_1 \subset\, \ldots\, \subset \cE_\ell=\cE,$ which by construction is subject to the condition $\frac{d_1}{r_1} >\, \ldots\, > \frac{d_l}{r_l}$. The complex codimension $d_\mu$ of $\cC_\mu$ in $\cC$ is finite and given by $$d_\mu = \sum_{1\leq i <j \leq \ell} (d_ir_j - d_jr_i + r_ir_j(g-1)).$$ An element $\si\in\Si$ sends the Harder-Narasimhan filtration of $\cE$ to the Harder-Narasimhan filtration of $\si(\cE)$ so the $\Si$-action on $\cC$ preserves each stratum $\cC_\mu$ and we have that:
\begin{enumerate}
\item For each $\mu$, $\cC^{\Si^+}/\Cmu^{\Si^+}$ is a finite-dimensional complex vector space and $$\codim_{\cC^{\Si^+}}\,\Cmu^{\Si^+} = \codim_{\cC}\, \Cmu = \dmu.$$
\item The group $\Si/\Si^+$ acts on $\Cmu^{\Si^+}$ and, if that group is non-trivial, the real codimension of $\Cmu^{\Si}$ in $\cC^\Si$ is equal to $\dmu$, the complex codimension of $\Cmu^{\Si^+}$ in $\cC^{\Si^+}$.
\end{enumerate}
\noindent Let us now show that $\Css^{\Si^+}$ is non-empty and connected. For all $\mu$ such that $\Cmu^{\Si^+}\neq \Css^{\Si^+}$, $\Cmu^{\Si^+}$ is contained in the closure of $\cC_{\mu'}^{\Si^+}$ for some $\mu'$ of the form $((r_1,d_1),(r_2,d_2))$ with $\frac{d_1}{r_1} > \frac{d_2}{r_2}$ (and $r_1+r_2=r$). Since $r\geq 2$ and $g\geq 2$, one has $$d_{\mu'} = d_1r_2 -d_2r_1+r_1r_2(g-1) \geq 1+(r-1)(g-1) \geq 2,$$ So the complement of $\Css^{\Si^+}$ in $\cC^{\Si^+}$ is a countable union of closed submanifolds of complex codimension greater or equal to $2$, which proves that $\Css^{\Si^+}$ is open and dense in $\cC^{\Si^+}$. In particular, it is non-empty. By results of Daskalopoulos and Uhlenbeck (\cite{Dask_Uhl}), the inclusion map $j:\Css^{\Si^+} \hookrightarrow \cC^{\Si^+}$ induces isomorphisms on homotopy groups $\pi_k(\Css^{\Si^+}) \overset{j_*}{\lra} \pi_k(\cC^{\Si^+})$ for all $k\leq (r-1)(g-1)+1$. This implies in particular that $\Css^{\Si^+}$ is connected for all $r\geq 2$ and all $g\geq 2$. Again by results of \cite{Dask_Uhl}, $\Cs^{\Si^+}$ is open and dense in $\Css^{\Si^+}$ (in particular, it is non-empty) and it is connected if $(r-1)(g-1)\geq 2$, i.e.\ if $r>2$ or $g>2$. 

The same type of argument works for $\Css^{\Si} = (\Css^{\Si^+})^{\Si/\Si^+}$ and $\Cs^{\Si} = (\Cs^{\Si^+})^{\Si/\Si^+}$ because, either $\Si/\Si^+$ is trivial and there is nothing to prove, or $\Si/\Si^+\simeq\Z/2\Z$ and the complex codimension $d_{\mu'}$ in the proof above is now a real codimension, so the inequality $d_{\mu'}\geq 2$ is enough to guarantee that $\Css^\Si$ is open and dense (in particular, non-empty) in $\cC^\Si$. Then the results of Daskalopoulos and Uhlenbeck can be adapted to this real setting (see \cite{BHH}) to show that $\Css^\Si$ is connected if $r\geq 2$ and $g\geq 2$, and that $\Cs^\Si$ is always non-empty, as well as connected if $r>2$ or $g>2$. We have thus proved the following result.

\begin{theorem}
Recall that $g\geq 2$, $r\geq 1$ and $d\in\Z$. Given a $2$-cocycle $c\in Z^2(\Si;\cZ(\U(r)))$ and a $c$-equivariant Hermitian vector bundle $(E,\tau)$ over $(X,\Si)$, the moduli space $\cM_X^{\Si}(r,d,\tau):= F^{-1}(\{\frac{d}{r}\})_\irr^{\Si} / \cG_E^\Si$ is non-empty and connected, except possibly if $g=2$, $r=2$ and $d$ is even.
\end{theorem}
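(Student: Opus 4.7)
The plan is to reduce the statement to a purely holomorphic assertion about the space $\Cs^{\Si}$ of $\Si$-invariant stable holomorphic structures on $E$, and then to run a Harder--Narasimhan stratification argument inside $\cC^{\Si}$, combined with the equivariant Daskalopoulos--Uhlenbeck theory. More precisely, I would first invoke the $\Si$-invariance of the Yang--Mills flow (which is clear, since $\beta_{\si}$ intertwines $F$ with $\alpha_{\si}$ and the Yang--Mills functional is therefore $\Si$-invariant) together with its convergence (\cite{Dask,Rade}) to produce a $\cG_E^{\Si}$-equivariant deformation retraction of $\Cs^{\Si}$ onto $F^{-1}(\{\tfrac{d}{r}\})_\irr^{\Si}$, exactly as in Section~3.3 of \cite{Sch_JDG}. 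Passing to $\cG_E^{\Si}$-orbits, non-emptiness and connectedness of $\cM_X^{\Si}(r,d,\tau)$ then follow from the corresponding properties of $\Cs^{\Si}$. The case $r=1$ is immediate, since every holomorphic line bundle is stable, so $\Cs^{\Si}=\cC^{\Si}$ is a non-empty affine subspace of $\cC$.

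For $r\geq 2$, I would work first on the $\Si^+$-fixed locus, where $\Si^+=\ker\eps$ acts by holomorphic transformations. Since the Harder--Narasimhan type of $\cE$ is a discrete invariant preserved by any $\si\in\Si$, each stratum $\Cmu$ is $\Si$-stable, and the linear slice description of $\Cmu$ shows that $\Cmu^{\Si^+}$ has complex codimension in $\cC^{\Si^+}$ equal to the complex codimension $\dmu$ of $\Cmu$ in $\cC$. For any destabilising type $\mu'=((r_1,d_1),(r_2,d_2))$ with $r_1+r_2=r\geq 2$ and $g\geq 2$, the formula $d_{\mu'}=d_1r_2-d_2r_1+r_1r_2(g-1)\geq 1+(r-1)(g-1)\geq 2$ shows that $\Css^{\Si^+}$ is the complement in $\cC^{\Si^+}$ of a countable union of closed complex submanifolds of codimension $\geq 2$, hence open, dense and connected. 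The equivariant version of the Daskalopoulos--Uhlenbeck theorem (\cite{Dask_Uhl}) then gives that $j_*:\pi_k(\Css^{\Si^+})\to\pi_k(\cC^{\Si^+})$ is an isomorphism for $k\leq (r-1)(g-1)+1$, and that $\Cs^{\Si^+}$ is open and dense in $\Css^{\Si^+}$, non-empty and connected as soon as $(r-1)(g-1)\geq 2$, i.e.\ $r>2$ or $g>2$; in the remaining low-rank low-genus situation one uses $\gcd(r,d)=1$ (when $r=2$ and $d$ is odd) to get $\Css=\Cs$ and recover connectedness, while for $r=2,g=2,d$ even the argument breaks, which is precisely the exceptional case stated.

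Finally, when $\Si/\Si^+$ is non-trivial (hence isomorphic to $\Z/2\Z$ and acting by an anti-holomorphic involution on $\cC^{\Si^+}$), I would take $\Si/\Si^+$-fixed points inside $\cC^{\Si^+}$. The complex codimension bounds from the previous step become real codimension bounds for $\Cmu^{\Si}$ in $\cC^{\Si}$, but the inequality $d_{\mu'}\geq 2$ is still what is needed to conclude that the complement of $\Css^{\Si}$ in $\cC^{\Si}$ is a countable union of closed real submanifolds of codimension $\geq 2$, so that $\Css^{\Si}$ is open, dense and connected. The main technical point, and the obstacle I expect to be most delicate, is to extend Daskalopoulos--Uhlenbeck's homotopy comparison between the semi-stable and stable loci to this real equivariant setting; for this I would invoke the adaptation carried out in \cite{BHH}, which gives exactly the needed statement that $\Cs^{\Si}$ is open and dense in $\Css^{\Si}$, always non-empty, and connected unless $(r-1)(g-1)<2$ together with $\gcd(r,d)\neq 1$, i.e.\ unless $g=2$, $r=2$ and $d$ is even. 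Combined with the Yang--Mills retraction of the first paragraph, this proves the theorem.
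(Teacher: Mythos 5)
Your proposal is correct and follows essentially the same route as the paper's own argument: reduction to $\Cs^\Si$ via the $\Si$-invariant Yang--Mills flow and its convergence, the Harder--Narasimhan codimension estimate $d_{\mu'}\geq 1+(r-1)(g-1)\geq 2$ applied first on the $\Si^+$-fixed locus and then on the $\Si$-fixed locus (where it becomes a real codimension bound), the equivariant and real adaptations of Daskalopoulos--Uhlenbeck (\cite{Dask_Uhl}, \cite{BHH}), and the coincidence $\Cs=\Css$ when $r=2$ and $d$ is odd to handle genus $2$. The only cosmetic difference is that you deduce connectedness of the semistable locus directly from the codimension-$2$ complement, whereas the paper routes it through the homotopy-group isomorphisms; both are valid.
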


\begin{proof}
The proof for $r=1$, $g>2$ or $g=2$ and $r>2$ was given above. For $g=2$, $r=2$ and $d$ odd, the result also holds because $\Cs=\Css$ in that case and we have seen that $\Css^\Si$ is always non-empty and connected if $g\geq 2$.
\end{proof}

\noindent We expect $\cM_X^{\Si}(2,2d,\tau)$ to also be non-empty and connected when $X$ is of genus $2$ but the proof above is inconclusive in that case.

\section{Representations of orbifold fundamental groups}\label{NS_and_orb}

\subsection{Narasimhan-Seshadri correspondences}\label{NS_correspondence}

One may view the Narasimhan-Seshadri correspondence as follows. Let $\cH$ be the Poincar\'e upper half-plane and let $\Ga\subset \PSL(2;\R) = \Aut(\cH)$ be a discrete subgroup. If $\Ga$ acts freely and cocompactly on $\cH$, then there is a homeomorphism between the space $\Hom(\Ga;\U(r))_\irr/\U(r)$ of (equivalence classes of) irreducible unitary representations of $\Ga$ and the moduli space $\cM_X(r,0)$ of stable holomorphic vector bundles of rank $r$ and degree $0$ over the compact Riemann surface $X:=\cH/\Ga$ (in particular, $\pi_1(X)\simeq \Ga$). More generally, Narasimhan and Seshadri proved the existence of a homeomorphism between $\cM_X(r,d)$ and the space of unitary representations of a certain central extension $\Ga_d$ of $\Ga$ by $\Z$ (the group $\Ga_d$ can be identified with the fundamental group of the unit circle bundle of an arbitrary line bundle of degree $d$ over $X$, see \cite{Furuta-Steer}). Note that $\Ga_d$ no longer acts effectively on $\cH$ here, because the natural map $\Ga_d\lra \Ga\subset\Aut(\cH)$ has kernel $\Z$, by construction. Generalising in another direction, one may ask what happens when there is only a normal subgroup of finite index $\Ga'\subset \Ga$ such that $\Ga'$ acts freely. The compact Riemann surface $Y:=\cH/\Ga$ then has a natural orbifold structure, obtained from the action of the finite group $\Si:=\Ga/\Ga'$ on the Riemann surface $X:=\cH/\Ga'$, and in fact $\piorb(Y) \simeq \Ga$. We will see shortly what becomes of the Narasimhan-Seshadri correspondence when we assume moreover that the group $\Ga'$ is still cocompact (such an assumption being justified by a standard application of Selberg's lemma: $\Ga$ contains a torsion-free normal subgroup of finite index $\Ga'$). Yet another type of generalisation is obtained by considering discrete subgroups of the full automorphism group $\Aut^\pm(\cH) \simeq \PGL(2;\R) \simeq \PSL(2;\R)\rtimes\Z/2\Z$ (where the action of $\Z/2\Z$ on $\PSL(2;\R)$ is given by conjugation by the matrix $\begin{pmatrix}-1 & 0 \\ \ \ 0 & 1 \end{pmatrix}$, corresponding to the anti-holomophic involution $z\lmt -\ov{z}$ of $\cH$).This means allowing $\Ga$ to contain anti-holomorphic transformations of $\cH$. If we consider the group homomorphism $\eta:\Ga\lra \{\pm1\}\simeq\Z/2\Z$ sending anti-holomorphic maps to $(-1)$ and set $\Ga':=\ker\eta$, then saying that $\eta$ is non-trivial amounts to saying that the compact Riemann surface $X:=\cH/\Ga'$ has a natural anti-holomorphic involution, coming from the induced $(\Ga/\Ga')$-action on $\cH/\Ga'$ (i.e., as an algebraic curve, $X$ is defined over the field of real numbers). The topological surface $Y:=\cH/\Ga$ then has a natural structure of Klein surface (dianalytic manifold of dimension $2$) and, again, we have an isomorphism $\piorb(Y)\simeq \Ga$. The difference with the previous case can be explained as follows: in both cases we have a short exact sequence \begin{equation}\label{ses_piorb}1\lra \pi_1(X) \lra \piorb(Y) \lra \Si \lra 1\end{equation} and a group homomorphism $\eps:\Si\lra\Z/2\Z$ induced by the group homomorphism $\Aut^\pm(\cH)\lra \{\pm1\}$ taking anti-holomorphic transformations to $(-1)$, but in the first case (i.e.\ when $\Ga\subset\Aut(\cH)$) the homomorphism $\eps$ is trivial, while it is non-trivial if $\Ga$ contains an antiholomorphic transformation.

In the remainder of the present section, we will explain what type of Narasimhan-Seshadri correspondence we obtain in this orbifold setting. Further generalisations are possible, for instance to orbifold fundamental groups of Real Seifert manifolds (\cite{Sch_JDG}) or to discrete subgroups $\Ga\subset\Aut^\pm(\cH)$ which only have finite covolume (the case where $\Ga\subset \Aut(\cH)$ acts freely and is of finite covolume first appeared in the Mehta-Seshadri theorem, see \cite{MS}; it has been vastly generalised in \cite{Boalch_parahoric} and in \cite{Balaji-Seshadri}). Combining the previous two settings, we now consider a cocompact Fuchsian group $\Ga\subset \PGL(2;\R)=\Aut^\pm(\cH)$. By Selberg's lemma, $\Ga^+:=\Ga\cap\PSL(2;\R)$ contains a torsion-free normal subgroup of finite index, say $\Ga'$. We shall denote by $\eta:\Ga\lra \Z/2\Z$ the group homomorphism taking anti-holomorphic transformations to $(-1)$. We also fix once and for all a $2$-cocycle $e\in Z^2(\Z/2\Z;\cZ(\U(r)))$, where $\Z/2\Z$ acts on $\cZ(\U(r))\simeq S^1$ via the involution $z\lmt\ov{z}$, induced by the involution $\theta:u\lmt\ov{u}$ on $\U(r)$. The associated extension of $\Z/2\Z$ by $\U(r)$ will be denoted by $\U(r)\times_e \Z/2\Z$. First, let us discuss an appropriate class of representations for orbifold fundamental groups of the type $\Ga$ above.

\begin{definition}\label{orbifold_rep}
An $e$-twisted unitary representation of $\Ga$ is a group homomorphism $\rho:\Ga\lra \U(r)\times_e\Z/2\Z$ such that the diagram $$\xymatrix{\Ga \ar[rr]^{\rho} \ar[rd]^{\eta} & & \U(r)\times_e \Z/2\Z \ar[dl] \\ & \Z/2\Z &}$$ commutes. The set of such representations will be denoted by $\Hom_\eta(\Ga;\U(r)\times_e\Z/2\Z)$. Two representations $\rho_1$, $\rho_2$ are called equivalent if there is an element $u\in\U(r)$ such that, for all $\ga\in\Ga$, $\rho_2(\ga) = u\rho_1(\ga)u^{-1}$.
\end{definition}

\noindent We note that, if $\eta$ is the trivial homomorphism (i.e.\ $\Ga\subset\PSL(2;\R)$), then $$\Hom_\eta(\Ga;\U(r)\times_e\Z/2\Z)=\Hom(\Ga;\U(r)).$$ We now want to interpret the representation space $$\Hom_\eta(\Ga;\U(r)\times_e\Z/2\Z)/\U(r)$$ as a moduli space of holomorphic vector bundles on a certain Riemann surface $X$. To that end, we observe the following: if we fix a cocompact normal subgroup of finite index $\Ga'\subset\Ga^+=\Ga\cap\PSL(2;\R)$ that acts freely on $\cH$ and set $\Si:=\Ga/\Ga'$, then we have a compact Riemann surface $X:=\cH/\Ga'$ equipped with an action of the finite group $\Si\hookrightarrow\Aut^\pm(X)$ and the group homomorphism $\eta:\piorb(X/\Si)\simeq \Ga \lra \Z/2\Z$ canonically lifts to a group homomorphism $\piorb(X/\Si)\lra \Si$, making the diagram $$\xymatrix{& \Si \ar[d]^{\eps} \\ \piorb(X/\Si) \ar[ur] \ar[r]^{\eta} & \Z/2\Z}$$ commute, where $\eps:\Si\lra \Z/2\Z$ is defined as in \eqref{construction_eps}. The set $\Hom_\eta(\Ga;\U(r)\times_e\Z/2\Z)$ can therefore also be seen as the set $\Hom_\eps(\Ga;\U(r)\times_e\Z/2\Z)$ consisting of group homomorphisms $\rho:\piorb(X/\Si)\lra\U(r)\times_e\Z/2\Z$ such that the diagram $$\begin{CD}
1 @>>> \pi_1(X) @>>> \piorb(X/\Si) @>>> \Si @>>> 1\\
@. @VVV @VV{\rho}V @VV{\eps}V @.\\
1 @>>> \U(r) @>>> \U(r)\times_e \Z/2\Z @>>> \Z/2\Z @>>> 1
\end{CD}$$ commutes. In particular, the set $\Hom_\eps(\piorb(X/\Si);\U(r)\times_e\Z/2\Z)$ that we have just defined is actually independent of the choice of the presentation $(X,\Si)$ for the orbifold $Y:=\cH/\Ga\simeq X/\Si$ (i.e.\ independent of the choice of $\Ga'$).

We will now show that, 
for all $\rho\in\Hom_\eps(\piorb(X/\Si);\U(r)\times_e\Z/2\Z)$ 
the polystable holomorphic vector bundle $$\cE_\rho:= (\cH\times\C^r)/\pi_1(X)$$ over $X$ comes equipped with a natural $(\eps^*e)$-equivariant structure in the sense of Definition \ref{c-equiv_struct}, where $\eps^*e
\in Z^2(\Si;\cZ(\U(r)))$. In order to see this, the basic construction goes as follows. We first choose a family $(\ga_\si)_{\si\in\Si}$ of elements of $\piorb(X/\Si)$ such that, for all $\si\in\Si$, $\ga_\si$ maps to $\si$ under the canonical map $\piorb(X/\Ga)\lra\Si$ constructed above (note that, in general, we do not have $\ga_{\si_1}\ga_{\si_2}=\ga_{\si_1\si_2}$; the short exact sequence \eqref{ses_piorb} need not admit splittings) and we consider the transformations $$\taut_{\si} : \begin{array}{ccc} \cH\times \C^r & \lra & \cH\times \C^r\\ (p,v) & \lmt & (\ga_\si\cdot p, \rho(\ga_\si)\cdot v)\end{array}.$$ Here, $\rho(\ga_\si)=(u_\si,\eps(\si))\in\U(r)\times_c\Z/2\Z$ acts on $v\in\C^r$ via $\rho(\ga_\si)\cdot v := u_\si\theta_\si(v)$. Each $\taut_\si$ then descends to a map $\tau_\si:\cE_\rho\lra\cE_\rho$ and the family $(\tau_\si)_{\si\in\Si}$ thus constructed satisfies the conditions of Definition \ref{c-equiv_struct} for the cocycle $c:=\eps^*e\in Z^2(\Si;\cZ(\U(r)))$. Moreover, the isomorphism class of the $(\eps^*e)$-equivariant bundle $(\cE_\rho,(\tau_\si)_{\si\in\Si})$ obtained in that way is independent of the choice of the family $(\ga_\si)_{\si\in\Si}$ and the isomorphism class of the representation $\rho\in\Hom_\eps(\piorb(X/\Si);\U(r)\times_e\Z/2\Z)$. We refer to Section 2.2 of \cite{Sch_JDG} for further details of that construction, where it is presented in the case where $\eps:\Si\lra\Z/2\Z$ is an isomorphism but with the same general formalism. We have therefore defined a Narasimhan-Seshadri map, from the representation space 
$$\Hom_\eps(\Ga;\U(r)\times_e \Z/2\Z) / \U(r),$$ to the space of isomorphism classes of polystable $(\eps^*e)$-equivariant holomorphic vector bundles of rank $r$ and degree $0$. As a matter of fact, there is such a Narasimhan-Seshadri map for every choice of a presentation $(X,\Si)$ for the orbifold $Y=\cH/\Ga$. Our main result in this section is then the following analogue of the Narasimhan-Seshadri theorem for orbifold fundamental groups of the type $\Ga\subset\PGL(2;\R)$ that we have been considering.

\begin{theorem}\label{NS_thm}
Let $\Ga\subset\PGL(2;\R)$ be a cocompact Fuchsian group and let $\Ga'$ be a torsion-free normal subgroup of finite index of the group $\Ga^+:=\Ga\cap\PSL(2;\R)$. Denote by $\Si$ the finite group $\Ga/\Ga'$, by $\eps:\Si\lra\Z/2\Z$ the group homomorphism constructed in \eqref{construction_eps}, and by $X$ the compact Riemann surface $\cH/\Ga'$. Then, given a $2$-cocycle $e\in Z^2(\Z/2\Z;\cZ(\U(r)))$, there is a homeomorphism between the representation space $$\Hom_\eps(\Ga;\U(r)\times_e\Z/2\Z)/\U(r)$$ and the moduli space $\ov{\cM}_X^\Si(r,0,\eps^*e)$, consisting of isomorphism classes of polystable $(\eps^*e)$-equivariant holomorphic vector bundles of rank $r$ and degree $0$, that moduli space being equipped with the topology given by the bijective correspondence $$\ov{\cM}_X^\Si(r,0,\eps^*e)\simeq \bigsqcup_{[\tau]} F^{-1}(\{0\})^\Si / \cG_E^\Si,$$ where the union runs over all smooth isomorphism classes of $(\eps^*e)$-equivariant Hermitian vector bundles $(E,\tau)$ of rank $r$ and degree $0$.
\end{theorem}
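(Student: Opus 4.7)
The plan is to promote the Narasimhan-Seshadri construction sketched immediately before the theorem statement — which assigns to $\rho\in\Hom_\eta(\Ga;\U(r)\times_e\Z/2\Z)$ the bundle $\cE_\rho=(\cH\times\C^r)/\pi_1(X)$ equipped with the family $(\tau_\si)_{\si\in\Si}$ arising from lifts $\ga_\si\in\Ga$ of $\si\in\Si$ — to a bijection of equivalence classes, and then upgrade this bijection to a homeomorphism by matching the gauge-theoretic stratification of Theorem \ref{main_result} on both sides. First, I would check that $\rho\lmt(\cE_\rho,(\tau_\si)_{\si\in\Si})$ is well-defined up to $(\eps^*e)$-equivariant isomorphism: two different choices of lifts $\ga_\si$ differ by elements of $\pi_1(X)$, which produces $(\eps^*e)$-equivariantly isomorphic structures, while replacing $\rho$ by $u\rho u^{-1}$ for $u\in\U(r)$ yields an isomorphic equivariant bundle. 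Geometric polystability of $\cE_\rho$ is immediate from the classical Narasimhan-Seshadri theorem applied to the unitary (hence semisimple) restricted representation $\rho|_{\pi_1(X)}$.

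For injectivity, an isomorphism $(\cE_{\rho_1},\tau^1)\simeq(\cE_{\rho_2},\tau^2)$ underlies an isomorphism of degree-$0$ polystable bundles, so the classical NS theorem produces $u\in\U(r)$ with $\rho_1|_{\pi_1(X)}=u^{-1}(\rho_2|_{\pi_1(X)})u$; the compatibility of $u$ with $\tau^1$ and $\tau^2$ then forces $\rho_1(\ga_\si)=u^{-1}\rho_2(\ga_\si)u$ on each lift and hence on all of $\Ga$. For surjectivity, given a geometrically polystable $(\eps^*e)$-equivariant bundle $(\cE,\tau)$, I would equip $\cE$ with a $\tau$-compatible Hermitian metric and apply the classical NS correspondence to obtain a flat unitary connection $A$ on the associated smooth Hermitian bundle $(E,\tau)$ whose holonomy $\rho':\pi_1(X)\lra\U(r)$ recovers $\cE$. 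By the arguments of Section \ref{inv_conn_section}, the $\cG_E$-orbit of $A$ is $\Si$-invariant for the action $(\beta_\si)_{\si\in\Si}$ induced by $\tau$, and after a gauge transformation we may take $A$ itself to be $\Si$-invariant. Then each $\tau_\si$ acts on the fibre of $E$ above a chosen basepoint by a unitary map $u_\si$, and setting $\rho(\ga_\si):=(u_\si,\eps(\si))\in\U(r)\times_e\Z/2\Z$ extends $\rho'$; the relation $\tau_{\si_1}\tau_{\si_2}=(\eps^*e)(\si_1,\si_2)\,\tau_{\si_1\si_2}$ translates exactly into the cocycle relation required for $\rho$ to be a group homomorphism into the central extension.

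For the topology, I would use the decomposition $\ov{\cM}_X^\Si(r,0,\eps^*e)=\bigsqcup_{[\tau]}F^{-1}(\{0\})^\Si/\cG_E^\Si$, where $[\tau]$ ranges over isomorphism classes of $(\eps^*e)$-equivariant structures on a fixed smooth Hermitian bundle (classified by Lemma \ref{classif_of_c_equiv_structures}), together with the analogous stratification of the representation space according to the isomorphism type of the associated equivariant Hermitian bundle. On each stratum, the classical NS homeomorphism between $F^{-1}(\{0\})/\cG_E$ and the polystable representations of $\pi_1(X)$ is $\Si$-equivariant by the compatibility relation \eqref{comp_condition_group_actions}, so passing to $\Si$-invariants yields a continuous bijection with continuous inverse on each piece. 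The main obstacle is the extension step in surjectivity: reconstructing a genuine homomorphism $\rho:\Ga\lra\U(r)\times_e\Z/2\Z$ from the $\Si$-invariant connection requires keeping careful track of the central $\cZ(\U(r))$-valued discrepancies imposed by the cocycle $\eps^*e$, and it is exactly the presence of $e$ and of the extension $\U(r)\times_e\Z/2\Z$ that makes this possible even when the sequence \eqref{ses_piorb} does not split.
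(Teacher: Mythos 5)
Your proposal follows essentially the same route as the paper: the forward map is the quotient construction $\rho\lmt(\cE_\rho,(\tau_\si)_{\si\in\Si})$ already set up in Section~\ref{NS_correspondence}, and the inverse is obtained from the holonomy of $\Si$-invariant flat connections, which is exactly how the paper completes the proof (via Proposition~\ref{par_transport_Si_inv_connection} and the reference to Theorem 4.4 of \cite{Sch_JDG}). The one imprecision to fix is that the basepoint $x$ need not be $\Si$-fixed, so $\tau_\si$ maps $E_x$ to $E_{\si(x)}$ rather than acting on $E_x$; the unitary $u_\si$ must therefore be defined by composing $\tau_\si$ with parallel transport along a path from $x$ to $\si(x)$ representing the lift $\ga_\si$, and it is precisely the relation $T^A_{\si\circ\ga}=\tau_\si T^A_\ga\tau_\si^{-1}$ of Proposition~\ref{par_transport_Si_inv_connection} that makes the resulting $\widetilde{\Hol}$ a well-defined homomorphism on $\piorb(X/\Si)$.
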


Apart from its somewhat intricate statement, this correspondence is perhaps not very satisfying yet, for the following two reasons. First, topological types of $(\eps^*e)$-equivariant Hermitian vector bundles are not know in general (see however Proposition \ref{classif_of_c_equiv_structures} for an indication of what to compute in general and \cite{BHH} for an explicit description in the case where $\eps:\Si\lra\Z/2\Z$ is an isomorphism). And second, there is no available algebraic construction of the "moduli space" $$\ov{\cM}_X^\Si(r,0,\eps^*e)=\bigsqcup_{[\tau]}\ov{\cM}_X^\Si(r,0,\tau)$$ that the author is aware of. In Section \ref{modular_interp_section}, we were at best able to relate, via the maps $j_{\eps^*e,\tau}$, the smaller moduli spaces $\cM_X^\Si(r,0,\tau)$, consisting of geometrically stable bundles only, to fixed points of the action of $\Si$ on the moduli variety $\cM_X(r,0)$. Providing an algebraic construction, for $\Ga$ satisfying the assumptions of Theorem \ref{NS_thm}, of moduli spaces of vector bundles over the orbifold $\cH/\Ga$  (in an appropiate category) and relating them to orbifold representations of $\Ga$ in the sense of Definition \ref{orbifold_rep} seems like a challenging problem, quite close in spirit to the original Narasimhan-Seshadri and Mehta-Seshadri correspondences. Note that the main difference between the problem just stated and the papers \cite{Boalch_parahoric,Balaji-Seshadri} is the fact that our group $\Ga$ is authorised to contain anti-holomorphic transformations of the Poincar\'e half-plane. We hope that, even after the problem above has been solved, the gauge-theoretic perspective developed in the present paper will be useful to study the topology (in particular, the number of connected components) of the moduli space $\Hom_{\eta}(\Ga;\U(r)\times_e\Z/2\Z) /\U(r)$, using an arbitrary presentation $(X,\Si)$ of the orbifold $Y=\cH/\Ga$, which was for instance the strategy adopted in \cite{Sch_JSG} for the case where $\eps:\Si\lra \Z/2\Z$ is an isomorphism. In fact, in the latter case, more topological information can be obtained by following up on this circle of ideas, such as Betti numbers with mod $2$ coefficients of the quotients $F^{-1}(\{\frac{d}{r}\})^\Si/\cG_E^\Si$, as shown in \cite{LS}.

\subsection{Holonomy representations associated to invariant connections}

To prove Theorem \ref{NS_thm}, it suffices to construct an inverse to the Narasimhan-Seshadri map constructed in Section \ref{NS_correspondence}. Such a map is provided, quite classically, by taking the holonomy of the relevant connections, namely here the $\Si$-invariant connections considered in Section \ref{inv_conn_section}. The main technical result is the following one, for the proof of which we refer to Proposition 4.2 of \cite{Sch_JDG}.

\begin{proposition}\label{par_transport_Si_inv_connection}
Let $(E,(\tau_\si)_{\si\in\Si})$ be an $(\eps^*e)$-equivariant Hermitian vector bundle over $(X,\Si)$ and let $A$ be a $\Si$-invariant unitary connection on $E$. Then, given any path $\ga\in X$, the parallel transport operators $T^A_\ga$ and $T^A_{\si\circ\ga}$ satisfy $T^A_{\si\circ\ga} = \tau_\si T^A_\ga \tau_{\si}^{-1}$. That is to say, one has a commutative diagram $$\begin{CD}
E_{\si(\ga(0))} @>{T^A_{\si\circ\ga}}>> E_{\si(\ga(1))}\\
@A{\tau_\si}AA @A{\tau_\si}AA\\
E_{\ga(0)} @>{T^A_{\ga}}> >E_{\ga(1)}.
\end{CD}$$
\end{proposition}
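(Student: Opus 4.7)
The statement is essentially an equivariance property of parallel transport, so the plan is to show that $\tau_\sigma$ carries horizontal lifts to horizontal lifts, and then read off the claimed identity from uniqueness of horizontal lifts.

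First, I would unpack the hypothesis that $A$ is $\Sigma$-invariant. By definition of the $\Sigma$-action on $\cA_E$ given in \eqref{Si_action_on_conn_def}, invariance under $\beta_\si$ translates, via the isomorphism $\phi_\si:\si(E)\lra E$ (equivalently, the bundle map $\tau_\si:E\lra E$), into the statement that the horizontal distribution $H^A\subset TE$ determined by $A$ is preserved by $(\tau_\si)_*$. The only subtlety is that for anti-holomorphic $\si$, the map $\tau_\si$ is fibrewise $\C$-antilinear; but since parallel transport is defined purely in terms of the real horizontal distribution (it does not see the complex structure on the fibres), this poses no additional issue for the argument that follows.

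Next I would run the following direct argument. Fix a path $\ga:[0,1]\lra X$ and a vector $v\in E_{\ga(0)}$. Let $\widetilde{\ga}:[0,1]\lra E$ be the horizontal lift of $\ga$ with $\widetilde{\ga}(0)=v$, so that by definition $\widetilde{\ga}(1) = T^A_{\ga}(v)$. Consider the composite path $\tau_\si\circ\widetilde{\ga}:[0,1]\lra E$. By Condition (1) in Definition \ref{c-equiv_struct} (the commutative square over $X$), this path projects to $\si\circ\ga$. By the preservation of $H^A$ under $(\tau_\si)_*$ established in the first step, and because $\widetilde{\ga}\,'(t)\in H^A_{\widetilde{\ga}(t)}$ for all $t$, the velocity of $\tau_\si\circ\widetilde{\ga}$ is horizontal at every point, i.e.\ $\tau_\si\circ\widetilde{\ga}$ is a horizontal lift of $\si\circ\ga$ starting at $\tau_\si(v)$. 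By uniqueness of horizontal lifts with a prescribed initial condition,
\[
(\tau_\si\circ\widetilde{\ga})(1) \;=\; T^A_{\si\circ\ga}\bigl(\tau_\si(v)\bigr),
\]
while the left-hand side equals $\tau_\si\bigl(T^A_\ga(v)\bigr)$. Since $v\in E_{\ga(0)}$ was arbitrary, this yields $T^A_{\si\circ\ga}\circ\tau_\si = \tau_\si\circ T^A_\ga$, which is the commutativity of the asserted diagram, or equivalently $T^A_{\si\circ\ga} = \tau_\si\, T^A_\ga\, \tau_\si^{-1}$.

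The only step that requires care, and which I would regard as the main (modest) obstacle, is the first one: making precise, in a uniform way covering both the holomorphic and anti-holomorphic cases, that $\Si$-invariance $\beta_\si(A)=A$ is equivalent to $(\tau_\si)_* H^A = H^A$. This is essentially the content of the compatibility \eqref{comp_condition_group_actions} pulled back to the level of horizontal distributions, together with the observation that the centre-valued cocycle $c$ does not interfere (since $\cZ(\U(r))$ acts trivially on $\cA_E$, hence on $H^A$). Once this identification is in place, the rest of the proof is a two-line uniqueness argument as above.
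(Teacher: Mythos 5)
Your argument is correct. Note that the paper does not actually prove this proposition in the text; it defers entirely to Proposition 4.2 of \cite{Sch_JDG}, so there is no in-paper proof to compare against. Your horizontal-lift argument is the standard and expected one: the translation of $\beta_\si(A)=A$ into $(\tau_\si)_*H^A=H^A$ is the only point requiring care, and you identify and handle it correctly, including the observation that $\C$-antilinearity of $\tau_\si$ in the anti-holomorphic case is harmless because parallel transport only sees the underlying real horizontal distribution, and that the central cocycle $c$ plays no role since the statement involves a single $\tau_\si$ at a time. The uniqueness-of-horizontal-lifts step then closes the proof.
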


\noindent If moreover $A$ is a flat connection, then the parallel transport operators on loops at a given base point $x\in X$ induce a group homomorphism $\Hol:\pi_1(X;x)\lra \U(r)$, and the point is now to show that the $\Si$-invariance of $A$ implies that $\Hol$ extends to a group homomorphism $\widetilde{\Hol}: \piorb(X/\Si) \lra \U(r)\times_e \Z/2\Z$ that makes the following diagram commute $$\begin{CD}
1 @>>> \pi_1(X) @>>> \piorb(X/\Si) @>>> \Si @>>> 1\\
@. @VV{\Hol}V @VV{\widetilde{Hol}}V @VV{\eps}V @.\\
1 @>>> \U(r) @>>> \U(r)\times_e \Z/2\Z @>>> \Z/2\Z @>>> 1
\end{CD}$$ which follows from Proposition \ref{par_transport_Si_inv_connection} (we refer to Theorem 4.4 of \cite{Sch_JDG} for details). This completes the proof of Theorem \ref{NS_thm}.

\subsection{Equivariant representations}

We conclude by explaining how to simplify the statement of Theorem \ref{NS_thm} when the Riemann surface $X$ contains a $\Si$-fixed point $x$. Then the group $\Si$ acts on $\pi_1(X;x)$ via $\ga\lmt\si\circ\ga$ and we can consider the induced $\Si$-action on $\Hom(\pi_1(X;x);\U(r))$ given by $\beta^{\tau}_\si(\rho)=\theta_\si\circ\rho\circ \si^{-1}$. This is compatible with the $\Si$-action $(\theta_\si)_{\si\in\Si}$ on $\U(r)$, in the sense that $\beta^{\tau}_\si(g\rho g^{-1}) = \theta_\si(g)\beta^{\tau}_\si(\rho)\theta_\si(g)^{-1}$, and we note that $\rho$ is fixed by that $\Si$-action if and only if $\rho$ is $\Si$-equivariant. We are then in the same type of situation as in Section \ref{inv_conn_section}: the group $\U(r)^\Si$ acts on the set $\Hom(\pi_1(X;x);\U(r))_\irr^\Si$ and there is a map $$j: \Hom(\pi_1(X;x);\U(r))_\irr^\Si / \U(r)^\Si \lra (\Hom(\pi_1(X;x);\U(r))/\U(r))^\Si$$ whose image is contained in the fibre $\cT^{-1}(\{[1]\})$ of a type map $$\cT:  (\Hom(\pi_1(X;x);\U(r))/\U(r))^\Si \lra H^2(\Si;\cZ(\U(r)))$$  which is defined as in Section \ref{inv_conn_section}. The point is to realise that the choice of $x\in X^\Si$ defines an isomorphism $\piorb(X/\Si)\simeq \pi_1(X;x)\rtimes \Si$ for the action of $\Si$ on $\pi_1(X;x)$ described earlier. And then, $\Si$-equivariant representations $\rho:\pi_1(X;x)\lra \U(r)$ will extend to representations $\widehat{\rho}:\pi_1(X;x)\rtimes \Si \lra \U(r)\rtimes \Z/2\Z$ by setting $\widehat{\rho}(\ga,\si) := (\rho(\ga),\eps(\si))$. The map $$\begin{array}{ccc}\Hom(\pi_1(X;x);\U(r))^\Si / \U(r)^\Si & \lra & \Hom_\eps(\pi_1(X;x) \rtimes \Si; \U(r)\rtimes \Si) / \U(r)\\
\left[\rho\right] & \lmt & [\widehat{\rho}]
\end{array}$$ thus defined is always injective, because if $\rho_1,\rho_2:\pi_1(X;x)\lra \U(r)$ are two $\Si$-equivariant representations such that $\widehat{\rho_2} = (g,1)\cdot \widehat{\rho_1}$ for some $g\in \U(r)$, then, for all $\si\in\Si$, $$(1,\eps(\si)) = \widehat{\rho_2}(1,\si) = (g,1)\cdot\widehat{\rho_1}(1,\si) = (g,1)(1,\eps(\si))(g^{-1},1) = (g\theta_\si(g^{-1}),1)$$ so $g\theta_\si(g^{-1})=1$, which means that $g\in\U(r)^\Si$, hence that $\rho_1=\widehat{\rho_1}|_{\pi_1(X;x)}$ and $\rho_2=\widehat{\rho_2}|_{\pi_1(X;x)}$ are $\U(r)^\Si$-conjugate. Moreover, as shown in Proposition 2.9 of \cite{Sch_JDG}, the map $[\rho]\lmt[\widehat{\rho}]$ is surjective when $H^1(\Si;\U(r))=\{[1]\}$, and when $H^1(\Si;\U(r))$ is not trivial, it is possible to modify the $\Si$-action on $\Hom(\pi_1(X;x);\U(r))$ and on $\U(r)$ by introducing a cocycle $(a_\si)_{\si\in\Si}$ representing a given cohomology class $\kappa\in H^1(\Si;\U(r))$, as in Section \ref{inv_conn_section} of the present paper, to prove that all points in $\Hom_\eps(\pi_1(X;x) \rtimes \Si; \U(r)\rtimes \Si) / \U(r)$ come from $\Si$-equivariant representations, for different actions of $\Si$. 



\end{document}